\newcounter{num}
\newcommand{\rnum}[1]{\setcounter{num}{#1}\roman{num}}
\definecolor{myred}{RGB}{255,50,50}         
\definecolor{myblack}{RGB}{0,0,0}           
\newcommand{\X}[1]{}
\newcommand{\red}[1]{\textcolor{black}{#1}}  
\newcommand{\blue}[1]{\textcolor{black}{#1}} 
\newcommand{\inner}[2]{\langle#1,#2\rangle}
\newcommand{\Arg}{\mathrm{Arg}}
\renewcommand{\Re}{\mathbb{R}}
\newcommand{\B}{\mathcal{B}}
\newcommand{\M}{\mathbf{M}}
\newcommand{\Q}{\mathcal{Q}}
\renewcommand{\L}{\mathcal{L}}
\newcommand{\Z}{\mathbb{Z}}
\newcommand{\dom}{\mathrm{dom}\,}
\newcommand{\supp}{\mathrm{supp}}
\newcommand{\stkout}[1]{\ifmmode\text{\sout{\ensuremath{#1}}}\else\sout{#1}\fi}
\newtheorem{theorem}{Theorem}[section]
\newtheorem{lemma}[theorem]{Lemma}
\newtheorem{definition}[theorem]{Definition}
\newtheorem{proposition}[theorem]{Proposition}
\numberwithin{equation}{section}
\title{Biobjective optimization with M-convex functions%
  \thanks{This work was supported by the Grant-in-Aid for Scientific
    Research (C) (JP25K15002) and Grant-in-Aid for Scientific Research (B) (JP25K03082) from Japan Society for the Promotion of Science.}
}
\author{
  Ellen H. Fukuda%
  \thanks{Graduate School of Informatics, Kyoto University, Kyoto \mbox{606--8501}, Japan.}
  \and
  Satoru Iwata%
  \thanks{Graduate School of Information Science and Technology, The University of Tokyo, Tokyo \mbox{113--8656}, Japan.}
  \thanks{Institute for Chemical Reaction Design and Discovery (ICReDD), Hokkaido University, Sapporo, Hokkaido \mbox{001--0021}, Japan.}
  \and
  Itsuki Nakagawa$^\ddag$%
}
\begin{document}

\maketitle

\begin{abstract}
  \noindent In this paper, we deal with two ingredients that, as far as we know, have not been combined until now: multiobjective optimization and discrete convex analysis. First, we show that the entire Pareto optimal value set can be obtained in polynomial time for biobjective optimization problems with discrete convex functions, in particular, involving an M$^\natural$-convex function and a linear function with binary coefficients. We also observe that a more efficient algorithm can be obtained in the special case where the M$^\natural$-convex function is M-convex. Additionally, we present a polynomial-time method for biobjective optimization problems that combine M$^\natural$-convex function minimization with lexicographic optimization. \\
  
  \noindent \textbf{Keywords:} biobjective optimization, discrete convex functions, M-convex functions, Pareto optimality.\\

  \noindent \textbf{Mathematics subject classification:} 
  52B40, 90C27, 90C29.
\end{abstract}


\section{Introduction}

In multiobjective optimization, multiple objective functions have to be minimized simultaneously. In this context, the concept of Pareto optimality (or efficiency) is used, as it represents the trade-offs between the objectives.
Many approaches have been developed for multiobjective optimization, including metaheuristics~\cite{JMT02} and descent methods~\cite{FS00,FGD14}. Another major class of methods is known as scalarization, in which several parameterized single-objective optimization problems are solved in order to approximate or recover the Pareto optimal set. Among these, the weighting (or weighted sum) method selects a weighting parameter and minimizes a convex combination of the objective functions using that parameter. When the decision variables are continuous, the weighting method can recover the entire Pareto optimal set by assuming the convexity of the functions~\cite{Mie99}. 

However, this does not hold when the feasible set has a discrete structure. In fact, it is known that some Pareto optimal points cannot be obtained with the weighting method even if all the functions are linear~\cite{Ehr05,EG00}. The solutions that can be obtained through this method are called supported Pareto (or efficient) points, and finding them is equivalent to solving a series of single-objective combinatorial optimization problems. In practice, however, the number of non-supported Pareto optimal points may grow exponentially with instance size. A well-known alternative is the $\varepsilon$-constraint method, which minimizes one selected objective function while treating the others as inequality constraints. Although this method can find the entire Pareto optimal set, the resulting scalarized problem is $\mathcal{NP}$-hard in general. In fact, multiobjective problems with combinatorial structures are hard in terms of complexity. It is usually $\mathcal{NP}$-hard and $\#\mathcal{P}$-complete, respectively, for finding and counting the Pareto solutions~\cite{Ehr00}. 

Dealing with multiobjective optimization problems on matroids is similarly challenging. In fact, the corresponding decision problem is also $\mathcal{NP}$-complete in general~\cite{Ehr96}. Despite the intractability of these problems, Gorski et al.~\cite{GKS23} recently showed that a specific biobjective problem on matroids admits only supported Pareto optimal solutions, and that the Pareto optimal value set is connected, in a sense related to the underlying discrete structure~\cite{GKR11}. In their setting, the two objectives are linear: one with nonnegative integer coefficients (which is also usual in multiobjective combinatorial optimization), and the other restricted to binary coefficients. With this particular structure of the Pareto optimal value set, they proposed an efficient method that enumerates this entire set.

In this paper, we also consider biobjective optimization problems, but one of the objectives is M-convex or, more generally, M$^\natural$-convex, extending the problem studied in~\cite{GKS23}. We further generalize the problem by considering optimization over integer lattice points instead of restricting to matroids. To the best of our knowledge, this is the first work that considers these discrete convex functions in the context of multiobjective optimization. We recall that M-convexity and M$^\natural$-convexity are key instruments in discrete convex analysis, introduced in~\cite{Mur96a,MS99}. An example of M-convex functions arises in network flow problems, while M$^\natural$-convexity has been shown to be equivalent to the economic concept of gross substitutes~\cite{FY03,Shi15}. More recently, the applicability of these functions has extended beyond economics and game theory to practical domains such as bicycle-sharing systems~\cite{Shi22}.

This study focuses on developing efficient algorithms for biobjective optimization over integer lattice points involving M-convex and M$^\natural$-convex functions. Since the weighting method is sufficient if one seeks only supported Pareto optimal solutions, we focus instead on cases where the entire Pareto optimal value set can be obtained. 

We first consider the biobjective problem with an M$^\natural$-convex function and a linear function with binary coefficients. Our proposed method is based on the works of Gorski et al.~\cite{GKS23} and Takazawa~\cite{Tak23}. More specifically, we reformulate the original problem as a sequence of single-objective optimization problems, where the M$^\natural$-convex function serves as the objective, and the linear function is incorporated as an equality constraint. 
We show that the whole Pareto optimal value set can be obtained in polynomial time. Based on this, we further specify the problem by considering the two objectives as an M-convex function and a linear function with binary coefficients. By exploiting the properties of M-convex functions, we refine the algorithm to achieve improved computational complexity. 

Lastly, we consider biobjective problems involving M$^\natural$-convex minimization and lexicographic optimality. In particular, we show that the reformulation with the $\varepsilon$-constraint method, using the order induced by the lexicographic cone, guarantees Pareto optimal values. By using the structure of the lexicographic cone, we also present a polynomial-time algorithm based on valuated matroid intersection that can enumerate this set of Pareto optimal values without requiring any filtering.

The outline of this paper is as follows. In Section~\ref{sec:preliminaries}, we define some notations, explaining also the basics about generalized matroids, discrete convex functions, multiobjective optimization, as well as related works. In Section~\ref{sec:MnatBB}, we discuss the biobjective optimization problem, where one objective is M$^\natural$-convex, and the other one is linear with binary coefficients, proposing also an algorithm to solve it. In Section~\ref{sec:MBB}, we consider the same biobjective problem but replace the M$^\natural$-convex function with an M-convex one. An improved version of the algorithm is then proposed for this problem. In Section~\ref{sec:MLB}, we investigate biobjective problems with M$^\natural$-convex functions and lexicographic objectives. Finally, we conclude the paper in Section~\ref{sec:conclusion}, with some possible future works.


\section{Preliminaries}
\label{sec:preliminaries}

We start this section with the following notations, which will be used throughout the paper. We use $\Z$ and $\Re$ to denote the set of all integers and real numbers, respectively. The notation $\Z^E$ (or $\Re^E$) means the integer (or real) space with coordinates indexed by the elements of a finite set $E$. If~$E$ contains $n$~elements, $\Z^E$ (or $\Re^E$) is often written as $\Z^n$ (or $\Re^n$). 
For a vector~$x$, its $e$-th component is written as $x(e)$. 
The $\ell_1$-norm and infinity norm of~$x$ are denoted by $\norm{x}_1$ and $\norm{x}_{\infty}$, respectively. We also denote by $\supp(x)$ the set of elements with positive components, i.e.,
\[
  \supp(x):=\{e\mid x(e)>0\}.
\]
Moreover, for given vectors~$x$ and~$y$, we write
$x \leq y$ ($x < y$) if and only if $x(e) \leq y(e)$ ($x(e) < y(e)$) hold for all $e$.
Here, we deal with extended-valued convex functions, and thus we use the notations 
$\overline{\Re} := \Re \cup \{ +\infty \}$ and 
$\underline{\Re} := \Re \cup \{ -\infty \}$. 
For a function~$f$ defined on a domain~$\mathcal{X}$, its \emph{effective domain} is defined by 
$\dom \, f := \left\{ x \in \mathcal{X} \mid f(x) \in\Re \right\}$.
For a nonempty finite set $E$ and a subset $S \subseteq E$, we write
\[
  x(S) := \sum_{e \in S} x(e),
\] 
and assume that $x(\varnothing) = 0$. 
A set $S \subseteq E$ can be uniquely associated with a vector 
$\chi_S \in \{0,1\}^E$ of dimension $|E|$. Such a vector $\chi_S$ is called the \emph{characteristic vector} of $S$, and is defined by
\begin{equation*}
  \chi_S(e) := \left\{
    \begin{alignedat}{2}
      1, \quad & \mbox{if } e \in S, \\
      0, \quad & \mbox{if } e \in E \setminus S.
    \end{alignedat}
 \right.
\end{equation*}
For $e \in E$, we abbreviate $\chi_{\{e\}}$ as $\chi_e$.

\subsection{Generalized matroids}

The concepts of matroids and polymatroids are closely related to discrete convex functions that will be reviewed in Section~\ref{sec:discrete_convex}. Here, we only recall the basic notions of matroids and refer the reader to~\cite{KV12,Oxl06} for further details. 

Let $E$ be a finite \emph{ground set} and $\B$ be a nonempty family of subsets of $E$. The pair $\M = (E,\B)$ is a \emph{matroid} if and only if the following property holds. \\[5pt]
\begin{tabular}{@{}ll}
  \textbf{(B)} & For any $B_1, B_2 \in \B$ and $u \in B_1 \setminus B_2$, there exists $v \in B_2 \setminus B_1 $ such that\\
  & $ B_1 \setminus \{u\} \cup\{v\}\in\B$ and $B_2\cup\{u\}\setminus\{v\}\in\B$.\\[5pt]
\end{tabular}

\noindent Here, $\mathcal{B}$ is called \emph{base family}, i.e., a collection of all bases of $\M$. The above property is known as the \emph{simultaneous exchange axiom}, introduced by Brualdi~\cite{Bru69}. In this paper, we adopt this characterization of matroids, rather than the standard exchange property, or the definition via independence system with the augmentation property. We denote by $\rank(\M)$ the \emph{rank} of $\M$, i.e., the common cardinality of the bases~\cite[Proposition~1.2.1]{Oxl06}.


Assume that the ground set $E$ is partitioned into $E_1,\dots,E_m$, that is, $E := \bigcup_{k=1}^m E_k$ and $E_k \cap E_{l} = \varnothing$ for all $k \neq l$. Given nonnegative integers $\eta_k$ with \red{$\eta_k\leq |E_k|$ for $k=1,\dots,m$, consider the family $\B$ of subsets $X \subseteq E$ such that $|E_k \cap X|=\eta_k$ hold for all $k$. The set family $\B$ satisfies \textbf{(B)}, and the matroid $\M=(E,\B)$ thus constructed is called a \emph{partition matroid}}. 

The concept of generalized polymatroids (also known as g-polymatroids), introduced by Frank~\cite{Fra84} \red{and Hassin~\cite{Has82}}, includes polymatroids and base polyhedron as special cases; see also~\cite{FT88,Fuj84} for additional discussions. Here we describe 
the concept of generalized matroids (g-matroids)~\cite{Tar85}, which are special cases of generalized polymatroids.

Let $\mathcal{P}$ be a nonempty family of subsets of $E$.
The pair $\M = (E,\mathcal{P})$ is a 
\emph{g-matroid} if and only if the following property holds. \\[5pt]
\begin{tabular}{@{}ll}
  \textbf{(P)} & For any $P_1, P_2 \in \mathcal{P}$ and $v \in P_1 \setminus P_2$, one of the following conditions holds:\\
  & (i) \,$P_1 \setminus\{v\}\in\mathcal{P}$ and 
  $P_2\cup\{v\}\in\mathcal{P}$, \\ 
  & (ii) there exists $u \in P_2 \setminus P_1 $ such that $ P_1 \setminus\{v\}\cup\{u\} \in \mathcal{P}$ \\ & \quad\:\: and $P_2 \cup\{v\}\setminus\{u\}\in\mathcal{P}$.
\end{tabular}

\noindent As in the case of matroids, we consider this characterization of g-matroids, based on the above exchange properties. Alternatively, however, one may use another exchange property due to Tardos~\cite{Tar85}, or the fact that a g-polymatroid can be obtained as a projection of a base polyhedron, as shown by Fujishige~\cite{Fuj84}.


\subsection{Discrete convex functions}
\label{sec:discrete_convex}

This work lies within the paradigm of discrete convex analysis, dealing with the so-called M-convex functions, introduced by Murota~\cite{Mur96a}. We also deal with M$^\natural$-convex functions, proposed by Murota and Shioura~\cite{MS99}, that contain the class of M-convex functions. We refer the reader to~\cite{Mur98,Mur03} for more details related to discrete convex analysis.

A function $f \colon \Z^E \rightarrow \overline{\Re}$ with $\dom f \ne \varnothing$ is called \emph{M-convex} if for each $x, y \in \dom f$ and $u \in \supp(x-y)$, there exists $v \in \supp(y-x)$ such that
\[ 
  f(x) + f(y) \geq f(x-\chi_u+\chi_v) + f(y+\chi_u-\chi_v).
\]
This is known as the \emph{exchange axiom}. Note that $x-\chi_u+\chi_v \in \dom f$ and $y+\chi_u-\chi_v \in \dom f$ are implicitly assumed.
Moreover, a function $f \colon \Z^E \rightarrow \overline{\Re}$ with $\dom f \ne \varnothing$ is called
\emph{M$^\natural$-convex} if for each $x,y \in\dom f$ and $u \in \supp(x-y)$, one of the following conditions holds:
\begin{enumerate}[(i)]
  \item[(i)] $f(x) +f(y) \geq f(x-\chi_u) + f(y+\chi_u)$,
  \item[(ii)] there exists $v \in \supp(y-x)$ such that $f(x) +f(y) \geq f(x-\chi_u+\chi_v) + f(y+\chi_u-\chi_v)$.
\end{enumerate}
The above definitions show that M$^\natural$-convexity generalizes M-convexity. 
Although the two concepts are essentially equivalent~\cite[Theorem~6.3]{Mur03}, 
we distinguish them in this study, as restricting to M-convex functions can lead to improved computational complexity.

We also recall that the set of integer lattice points in an integral base polyhedron (g-polymatroid) is called \emph{M-convex} (\emph{M$^\natural$-convex}) \emph{set}. It is known that a set being M-convex (M$^\natural$-convex) is equivalent to its indicator function being M-convex (M$^\natural$-convex). Moreover, the effective domain of an M-convex (M$^\natural$-convex) function is an \red{M-convex (M$^\natural$-convex)} set. 
The property that \red{the set family corresponding to an $0$-$1$ M-convex (M$^\natural$-convex) set} must satisfy is nothing other than the simultaneous exchange axiom~\textbf{(B)} (\textbf{(P)}), given in the previous section. 

Let us now present some properties involving the above functions. The sum of M-convex (M$^\natural$-convex) functions is not necessarily M-convex (M$^\natural$-convex), and such functions are called M$_2$-convex (M$^\natural_2$-convex) functions.  However, from the exchange axiom, it is easy to see that the sum of an M$^\natural$-convex function and a linear function is M$^\natural$-convex.

\subsection{Multiobjective optimization}

Besides discrete convex analysis, in this work, we also deal with optimization problems with two objective functions. For completeness, we discuss here the more general case with multiple objectives. We refer to~\cite{Ehr96, Ehr05} for more details related to multiobjective optimization, in particular, with combinatorial structure.
Let us then consider the following optimization problem with $m$~objective functions:
\begin{align*}
\min \quad & f(x) := \left(f_1(x),\,f_2(x),\dots,f_m(x)\right), \\
\mbox{s.t.} \quad & x \in \dom \, f.
\end{align*}
In multiobjective optimization, it is not possible, in general, to minimize all the objective functions simultaneously. So, the concept of Pareto optimality is used instead. 

If $f(y) \leq f(x)$ and $f(y) \neq f(x)$, then we say that $y$ \emph{dominates} $x$. 
A point $x$ is called a \emph{Pareto optimal} solution if there exists no feasible point that dominates $x$, i.e.,
\[ 
  x \mbox{ is Pareto optimal} \quad \Leftrightarrow \quad 
  \nexists\, y\in \dom f \mbox{ s.t. } f(y) \leq f(x),\ f(y) \neq f(x).
\]
In other words, an improvement in one objective function cannot be achieved without compromising at least one other objective. 
Furthermore, $y$ is said to \emph{strongly dominate} $x$ if $f(y) < f(x)$ holds. A point $x$ is called a \emph{weakly Pareto optimal} solution if there is no feasible point that strongly dominates $x$, i.e.,
\[ 
  x \mbox{ is weakly Pareto optimal} \quad \Leftrightarrow \quad 
  \nexists\, y\in \dom f \mbox{ s.t. } f(y) < f(x).
\]
The set of (weakly) Pareto optimal solutions is called the (\emph{weakly}) 
\emph{Pareto frontier}. Clearly, all Pareto optimal points are weakly Pareto,
but the converse does not necessarily hold. We also refer to $\{ f(x) \mid x \text{ is (weakly) Pareto optimal} \}$ as the (\emph{weakly}) \emph{Pareto optimal value set} (or (weakly) non-dominated set).

One of the most common approaches for solving multiobjective optimization is the 
\emph{scalarization method}, where a set of scalar-valued problems is solved repeatedly. Among them, the \emph{weighting} (or \emph{weighted sum}) \emph{method} minimizes a convex combination of the objective functions. While this method can generate Pareto optimal points, it cannot obtain all such points. In the context of multiobjective combinatorial optimization, those that cannot be reached are known as \emph{non-supported Pareto} optimal points, which may grow exponentially with instance size. \red{Here, we consider the \emph{supported Pareto} optimal points to be those that can be generated by the weighting method with strictly positive weights. In our context, they correspond to the values $z\in\mathrm{conv}(Z_\mathrm{P})$ such that $\mathrm{conv}(Z_\mathrm{P}) \cap (z-\Re^m_+) = \{z\}$, 
where $Z_\mathrm{P}$ is the Pareto optimal value set, $\mathrm{conv}(Z_\mathrm{P})$ denotes the convex hull of $Z_\mathrm{P}$, and $\Re^m_+$ is the nonnegative orthant of $\Re^m$. See~\cite{KS25} for a discussion of the differences among various definitions of supported Pareto solutions.}

The \emph{$\varepsilon$-constraint method}, on the other hand, consists of choosing one objective as the function to be minimized, moving the others to the constraints. The optimal solution to the resulting problem is known to be weakly Pareto optimal, and in particular, if the solution is unique, it corresponds to a Pareto optimal point. Using this approach, all the Pareto frontier can be obtained. However, since the constraints are of the knapsack type, the scalarized problem is generally $\mathcal{NP}$-hard.

\subsection{A specific biobjective optimization on matroids}

Although multiobjective optimization on a matroid $\M$ is complex in general, Gorski et al.~\cite{GKS23} have shown that the whole Pareto optimal value set can be computed in polynomial time in the biobjective case, where the objective functions over the set of bases $\B$ are defined by an integer-valued function $c \colon E \rightarrow \mathbb{Z}$ and a binary-valued function $b \colon E\rightarrow \{0,1\}$. More precisely, the biobjective problem is given by
\begin{align}
  \min \quad & (c(B),\,b(B)) \label{prob:LBB} \\
  \mbox{s.t.} \quad & B \in \mathcal B, \nonumber
\end{align}
where $c(B) := \sum_{e \in B} c(e)$ and $b(B) := \sum_{e \in B} b(e)$. The above problem is then replaced by a set of problems
\begin{align*}
  \min \quad &c(B) \\
  \mbox{s.t.} \quad &b(B) = k,  \\
  \quad &B \in \mathcal B,
\end{align*}
where $k \in \{0,\ldots, \mathrm{rank}(\M)\}$. The above subproblem is, in fact, a special case of matroid intersection, where the equality constraint is commonly referred to as a color-induced budget constraint.
For each $k$, one can compute in polynomial time a basis $B$ that solves the above problem (see Gabow and Tarjan~\cite{GT84}, and Gusfield~\cite{Gus84}).

In addition, recently, Takazawa~\cite{Tak23} showed that the more general problem below can also be solved in polynomial time for each $k$:
\begin{align*}
  \min \quad & g(x) \\
  \mbox{s.t.} \quad & \langle b,x\rangle = k,  \\
  \quad & x \in \dom g,
\end{align*}
where $g \colon \Z^E \rightarrow  \overline{\Re}$ is an
M$^\natural$-convex function and $\langle b,x \rangle:=\sum_{e\in E}b(e)x(e)$.  
Based on this, it is expected that the biobjective optimization problem involving an M$^\natural$-convex  function $g$ and a linear function $b$ with binary coefficients can be solved efficiently. This is indeed the topic of the next section.


\section{\texorpdfstring{Biobjective optimization with M$^\natural$-convex and binary linear functions}{Biobjective optimization with M-natural-convex and binary linear functions}}
\label{sec:MnatBB}

In this section, we consider the following biobjective optimization problem\footnote{We call the problem M$^\natural$BB, because it is a 
\underline{b}i-objective optimization problem involving an \underline{M$^\natural$}-convex function and a linear function with \underline{b}inary coefficients. The names of the other problems are similarly defined.}:
\begin{align}
  \min \quad & (g(x),\,\langle b,x\rangle) \label{prob:MnatBB} \tag{M$^\natural$BB} \\
  \mbox{s.t.} \quad & x \in \dom g, \nonumber
\end{align}
where $g \colon \Z^E \rightarrow  \overline{\Re}$ is an
M$^\natural$-convex function and $\langle b,x\rangle := \sum_{e\in E} b(e)x(e)$ with a binary function $b\colon E \rightarrow \{0,1\}$. Note that this problem generalizes~\eqref{prob:LBB}.

Let us partition the ground set $E$ based on the value of $b(e)$ for each $e \in E$, i.e.,
\begin{equation}
  \label{eq:partition_E}
  E_0 := \{e \in E \mid b(e) = 0\}, \quad
  E_1 := \{e \in E \mid b(e) = 1\}.
\end{equation}
We also consider the following partition of $\mathcal{P}:=\dom g$, based on the value of $\langle b,x\rangle$,~i.e., 
\[
  \mathcal{P}_i := \{ x\in\mathcal{P}\mid \langle b,x\rangle= i\}.
\]
Then, for each $\mathcal{P}_i$, we can define the family of optimal sets with respect to $g$ by
\[
  \mathcal T_i := \{ x \in \mathcal P_i \mid  g(x) \leq g(y)
  \mbox{ for all } y \in \mathcal{P}_i\}.
\]
Each of $\mathcal{T}_i$ corresponds to the set of optimal solutions of the following subproblem:
\begin{align}
  \min \quad & g(x) \label{prob:MnatBBi} \tag{M$^\natural$BB$_{=i}$} \\
  \mbox{s.t.} \quad & x \in \mathcal P, \nonumber \\
  & \langle b,x\rangle = i. \nonumber
\end{align}
It is important to note that the solution of~\eqref{prob:MnatBBi} is not necessarily a Pareto optimal point of~\eqref{prob:MnatBB}, since, for example, the optimal value of $\mathrm{(M^\natural BB_{=4})}$ may be lower than that of $\mathrm{(M^\natural BB_{=5})}$. If the equality constraint is replaced with the inequality $\langle b,x\rangle \leq i$, then the approach is equivalent to the $\varepsilon$-constraint method, and thus, the solutions of those problems are weakly Pareto. But here we note that~\eqref{prob:MnatBBi} is easier to solve, and because of the special structure of the problem, the Pareto optimal set can be derived from its solutions. 

\begin{definition}
  \label{def:transition}
  A \emph{transition} $(u,v)$ with respect to $x \in \mathcal{P}$ is a pair of $u \in E_1$ and $v \in E_0 \cup \{z\}$ satisfying $x-\chi_u+\chi_v \in \mathcal{P}$, where $z$ is a dummy symbol with $\chi_z$ being the zero vector. 
\end{definition}

The definition above can be seen as an extension of \emph{swap}, introduced  in~\cite{GKS23}, which is restricted to base families. In that setting, the case $v = z$ does not appear. We now define the cost of a transition below.

\begin{definition}
  For an M$^\natural$-convex function $g$, the \emph{cost of a transition} $(u,v)$ from $x \in \mathcal P$ is defined by
  \[ 
    g(x; u,v) := g(x-\chi_u+\chi_v) - g(x).
  \]
  Also, a \emph{minimum transition} with respect to $x$ is defined by a transition $(u,v)$ such that
  \[ 
    g(x; u,v) \leq g(x; u',v')
  \]
  for any transition $(u',v')$. 
\end{definition}

The following lemma will be necessary in the subsequent analysis. Before that, let us denote the following indices:
\[
  \underline{k} := \min \{ k \mid \mathcal T_k \neq \varnothing\} \quad
  \mbox{and} \quad \bar k := \max \{ k \mid \mathcal T_k \neq \varnothing\}.
\]

\begin{lemma}[{\cite[Lemma~2]{Tak23}}]
  \label{lem:T_k_nonempty}
   For all $k$ with $\underline k \leq k \leq \bar k$ we have $\mathcal T_k \neq \varnothing$.
\end{lemma}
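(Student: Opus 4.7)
My approach is to prove the stronger statement that $\mathcal{P}_k \neq \varnothing$ for every $k$ with $\underline{k}\leq k\leq \bar{k}$. The desired conclusion $\mathcal{T}_k \neq \varnothing$ then follows because $g$ is finite on $\mathcal{P}_k \subseteq \dom g$ and attains its minimum there under the standing boundedness of the effective domain implicit in this combinatorial setting.

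Fix $x_0 \in \mathcal{T}_{\underline{k}}$ and $y_0 \in \mathcal{T}_{\bar{k}}$, so that $\langle b, x_0\rangle = \underline{k}$ and $\langle b, y_0\rangle = \bar{k}$. Arguing by contradiction, suppose $\mathcal{P}_k = \varnothing$ for some intermediate $k$. Among all pairs $(x, y) \in \mathcal{P} \times \mathcal{P}$ with $\langle b, x\rangle < k < \langle b, y\rangle$ (nonempty since $(x_0, y_0)$ is such a pair), pick one minimizing $\|x - y\|_1$. Because $\langle b, y - x\rangle = \sum_{e\in E_1}(y(e) - x(e)) > 0$, there must exist $u \in E_1 \cap \supp(y - x)$. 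I then apply the M$^\natural$-convex exchange axiom to $y, x \in \dom g$ at this $u \in \supp(y - x)$.

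The axiom yields two cases. In case (i) both $y - \chi_u$ and $x + \chi_u$ lie in $\dom g$, so $\langle b, x + \chi_u\rangle = \langle b, x\rangle + 1$; this either equals $k$, directly contradicting $\mathcal{P}_k = \varnothing$, or is still below $k$, in which case $(x + \chi_u, y)$ has $\ell_1$-distance one less than $(x, y)$, contradicting minimality. In case (ii) there exists $v \in \supp(x - y)$ with $x + \chi_u - \chi_v \in \dom g$. If $b(v) = 0$, then $\langle b, x + \chi_u - \chi_v\rangle = \langle b, x\rangle + 1$ and the argument proceeds exactly as in case (i). If $b(v) = 1$, the $b$-value is preserved at $\langle b, x\rangle < k$, but because $y(u) > x(u)$ and $x(v) > y(v)$ both the $u$- and $v$-coordinate absolute gaps shrink by one, so $\|x + \chi_u - \chi_v - y\|_1 = \|x - y\|_1 - 2$, again contradicting the minimal choice.

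I expect the delicate point to be this last subcase, where the exchange does not cross the threshold $k$; the remedy is precisely the choice of a pair minimizing $\|x - y\|_1$, which turns any ``neutral'' exchange into strict progress and produces a contradiction. Putting all cases together yields $\mathcal{P}_k \neq \varnothing$, and therefore $\mathcal{T}_k \neq \varnothing$, for every $\underline{k} \leq k \leq \bar{k}$.
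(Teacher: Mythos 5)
Your proof is correct. Note that the paper does not prove this lemma itself --- it imports it from \cite[Lemma~2]{Tak23} --- so there is no in-paper proof to compare against; but your argument (pick a pair $(x,y)$ straddling the allegedly empty level $k$ that minimizes $\|x-y\|_1$, apply the M$^\natural$-exchange axiom at some $u\in E_1\cap\supp(y-x)$, and check that every outcome either produces a point of $\mathcal{P}_k$ or a strictly closer straddling pair) is precisely the style of exchange-plus-minimal-distance reasoning the paper uses in Lemma~\ref{lem:no-transition} and Proposition~\ref{prop:min_trans}, and all the cases work: when $b(v)=1$ the $b$-value is preserved but the $\ell_1$-distance drops by two, and in the remaining cases the $b$-value of $x$ increases by one while the distance still strictly decreases, so the contradiction with minimality (or with $\mathcal{P}_k=\varnothing$) is genuine. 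Your reduction of $\mathcal{T}_k\neq\varnothing$ to $\mathcal{P}_k\neq\varnothing$ is also legitimate in this setting, since the paper implicitly takes $\dom g$ to be bounded (hence finite), e.g.\ when it defines $L=\max\{\|x-y\|_\infty\mid x,y\in\dom g\}$, so $g$ attains its minimum on every nonempty $\mathcal{P}_k$.
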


\begin{lemma}
\label{lem:no-transition}
If there exists no transition from $x\in \mathcal{P}$, then $\underline{k}=\langle b,x\rangle$ holds. 
\end{lemma}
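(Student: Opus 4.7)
The plan is to prove the contrapositive by a standard minimal-counterexample argument using the exchange axiom for the M$^\natural$-convex set $\mathcal{P} = \dom g$. Recall that $\mathcal{P}$ is M$^\natural$-convex (as the effective domain of an M$^\natural$-convex function), so for any $x, y \in \mathcal{P}$ and $u \in \supp(x-y)$, either (i) $x-\chi_u \in \mathcal{P}$ and $y+\chi_u \in \mathcal{P}$, or (ii) there exists $v \in \supp(y-x)$ such that $x-\chi_u+\chi_v \in \mathcal{P}$ and $y+\chi_u-\chi_v \in \mathcal{P}$.

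Assume toward contradiction that $\langle b, x \rangle > \underline{k}$. Among all $y \in \mathcal{P}$ with $\langle b,y\rangle = \underline{k}$ (such $y$ exists by the definition of $\underline{k}$ together with Lemma~\ref{lem:T_k_nonempty}), choose one that minimizes $\norm{x-y}_1$. Since $\langle b,x\rangle - \langle b,y\rangle = \sum_{e \in E_1}(x(e)-y(e)) > 0$, the set $E_1 \cap \supp(x-y)$ is nonempty; pick $u$ from it. I then apply the M$^\natural$-exchange axiom to $\mathcal{P}$ at $x$, $y$, and this $u$, and split into cases according to the outcome.

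If alternative (i) holds, then $x - \chi_u \in \mathcal{P}$, and since $u \in E_1$ and $v := z$ satisfies $\chi_z = 0$, the pair $(u,z)$ is a transition from $x$, contradicting the hypothesis. If alternative (ii) holds with $v \in E_0$, then $u \in E_1$, $v \in E_0$, and $x-\chi_u+\chi_v \in \mathcal{P}$, so $(u,v)$ is a transition from $x$, again a contradiction. The remaining case is alternative (ii) with $v \in E_1$. Here the key observation is that exchanging within $E_1$ preserves the $b$-value: $\langle b,\, y+\chi_u-\chi_v\rangle = \langle b,y\rangle + 1 - 1 = \underline{k}$, so $y' := y+\chi_u-\chi_v \in \mathcal{P}$ still attains the minimum level $\underline{k}$. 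A direct component-by-component computation shows $\norm{x-y'}_1 = \norm{x-y}_1 - 2$, contradicting the minimality in the choice of $y$.

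The main delicate point, and essentially the only nontrivial one, is ruling out case (ii) with $v \in E_1$: this is exactly what forces the selection of $y$ that minimizes $\norm{x-y}_1$ (rather than taking an arbitrary $y$ with $\langle b,y\rangle = \underline{k}$). Everything else is bookkeeping with the exchange axiom and the partition $E = E_0 \cup E_1$, so once this $\ell_1$-descent step is in place, the three cases combine to contradict the assumption that no transition exists from $x$.
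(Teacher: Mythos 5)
Your proof is correct and follows essentially the same route as the paper's: pick a comparison point $y$ at a lower $b$-level minimizing $\norm{x-y}_1$, take $u\in E_1\cap\supp(x-y)$, apply the M$^\natural$-exchange axiom, and rule out the $v\in E_1$ case by the $\ell_1$-descent contradiction. The only cosmetic difference is that you take $y$ at level $\underline{k}$ while the paper takes $y$ at level $\langle b,x\rangle-1$; both choices support the identical case analysis.
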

\begin{proof}
Suppose to the contrary that $\langle b,x\rangle>\underline{k}$. 
Then Lemma~\ref{lem:T_k_nonempty} implies that there exists $y\in\mathcal{P}$ such that $\langle b,y\rangle=\langle b,x\rangle -1$. Assume that $y$ reaches the minimum value of $\|x-y\|_1$ among those that satisfy this condition. There must exist $u\in\supp(x-y)\cap E_1$. By the definition of M$^\natural$-convex functions, $g(x)+g(y)\geq g(x-\chi_u)+g(y+\chi_u)$ holds, or there exists $v\in\supp(y-x)$ such that $g(x)+g(y)\geq g(x-\chi_u+\chi_v)+g(y+\chi_u-\chi_v)$. In the former case, $(u,z)$ is a transition. In the latter case, if $v\in E_1$, then $y':=y+\chi_u-\chi_v$ satisfies $\langle b,y'\rangle=\langle b,y\rangle=\langle b,x\rangle-1$ and $\|x-y'\|_1=\|x-y\|_1-2$, which contradicts our choice of $y$. Therefore, we have $v\in E_0$, which means that $(u,v)$ is a transition. 
\end{proof}

The following result can be seen as a combination of the previous results by Gabow and Tarjan \cite[Theorem~3.1]{GT84} and Takazawa \cite[Lemma~3]{Tak23}.

\begin{proposition}
  \label{prop:min_trans}
  Let $k$ be an integer such that $\underline k +1 \leq k \leq \bar k$, and take $x_k \in \mathcal T_k$. Then, by applying the minimum transition $(u^\ast,v^\ast)$, we have $x_k -\chi_{u^\ast} +\chi_{v^\ast} \in \mathcal T_{k-1}$.
\end{proposition}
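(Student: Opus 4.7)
The plan is to first check that $x' := x_k - \chi_{u^*} + \chi_{v^*}$ belongs to $\mathcal{P}_{k-1}$ --- this is immediate because $u^* \in E_1$ and $v^* \in E_0 \cup \{z\}$ give $\langle b, x'\rangle = k - 1$. Existence of a minimum transition $(u^*, v^*)$ is guaranteed by the contrapositive of Lemma~\ref{lem:no-transition}, since $\langle b, x_k\rangle = k > \underline k$, and the transition set is finite. What then remains is to show $g(x') \le g(y)$ for every $y \in \mathcal{T}_{k-1}$; combined with $x' \in \mathcal{P}_{k-1}$, this places $x'$ in $\mathcal{T}_{k-1}$.

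For this step, I would fix $y \in \mathcal{T}_{k-1}$ minimizing $\|x_k - y\|_1$ (mirroring the argument of Lemma~\ref{lem:no-transition}), pick $u \in \supp(x_k - y) \cap E_1$ (nonempty because $\langle b, x_k - y\rangle = 1$), and apply the M$^\natural$-exchange axiom to $x_k, y$ and $u$. This yields either (i) $g(x_k) + g(y) \geq g(x_k - \chi_u) + g(y + \chi_u)$, or (ii) some $v \in \supp(y - x_k)$ with $g(x_k) + g(y) \geq g(x_k - \chi_u + \chi_v) + g(y + \chi_u - \chi_v)$. In case (i), $y + \chi_u \in \mathcal{P}_k$, so optimality of $x_k$ gives $g(y + \chi_u) \ge g(x_k)$, and the exchange yields $g(x_k; u, z) \le g(y) - g(x_k)$, with $(u, z)$ a genuine transition. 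In case (ii) with $v \in E_0$, $y + \chi_u - \chi_v \in \mathcal{P}_k$ plays the same role, $(u, v)$ is a transition, and we obtain $g(x_k; u, v) \le g(y) - g(x_k)$. In both of these productive cases, minimality of $(u^*, v^*)$ then yields $g(x') - g(x_k) = g(x_k; u^*, v^*) \le g(y) - g(x_k)$, that is, $g(x') \le g(y)$, as desired.

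The main obstacle is the remaining sub-case of (ii) with $v \in E_1$, where $(u, v)$ is not a transition from $x_k$ (since $v \notin E_0 \cup \{z\}$). I would dispose of it via the choice of $y$: here $y + \chi_u - \chi_v \in \mathcal{P}_{k-1}$ and $x_k - \chi_u + \chi_v \in \mathcal{P}_k$, so optimality of $x_k$ forces $g(x_k - \chi_u + \chi_v) \ge g(x_k)$; substituting into the exchange inequality gives $g(y + \chi_u - \chi_v) \le g(y)$, so $y + \chi_u - \chi_v \in \mathcal{T}_{k-1}$ by optimality of $y$. Yet $\|x_k - (y + \chi_u - \chi_v)\|_1 = \|x_k - y\|_1 - 2$, contradicting the choice of $y$. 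This rules out the bad sub-case and leaves only the two productive branches already handled, completing the argument.
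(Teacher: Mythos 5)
Your proposal is correct and follows essentially the same route as the paper's proof: pick a closest element of $\mathcal T_{k-1}$ in $\ell_1$-distance, apply the M$^\natural$-exchange axiom at some $u\in\supp(x_k-y)\cap E_1$, treat case (i), case (ii) with $v\in E_0$, and rule out $v\in E_1$ by the distance-minimality of $y$, then transfer the bound to the minimum transition. The only (harmless) difference is presentational: you bound the minimum transition cost by $g(y)-g(x_k)$ directly, whereas the paper first derives exact equalities showing a specific transition lands in $\mathcal T_{k-1}$; you also make explicit, via Lemma~\ref{lem:no-transition}, that a transition exists, which the paper leaves implicit.
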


\begin{proof}
  From Lemma~\ref{lem:T_k_nonempty}, we know that $\mathcal T_{k-1} \neq \varnothing$.
  Hence, we can select $x' \in \mathcal T_{k-1}$ such that $\|x'-x_k\|_1$ is minimized.
  Since $x_k \in \mathcal P_k$ and $x' \in \mathcal P_{k-1}$, we obtain $\langle b,x'\rangle < \langle b,x_k \rangle$. Thus, there exists $u \in E_1$ with $u\in \supp(x_k-x')$. From the M$^\natural$-convexity of $g$, one of the following two cases holds:
  \begin{enumerate}[(i)]
    \item[(i)] $g(x_k)+g(x')\geq g(x_k-\chi_u)+g(x'+\chi_u)$, 
    \item[(ii)] there exists $v \in\supp(x'-x_k)$ such that
    $g(x_k)+g(x')\geq g(x_k-\chi_u+\chi_v)+g(x'+\chi_u-\chi_v)$.
  \end{enumerate}
  Let us analyze the above two cases separately.

  First, assume that (i) holds. Since we have $x_k - \chi_u \in \mathcal P_{k-1}$ and $x' +\chi_u \in \mathcal P_{k}$, by the definitions of $\mathcal T_k$ and $\mathcal T_{k-1}$, we have 
  \begin{equation}
    \label{eq:case_i_g}  
    g(x') \leq g(x_k -\chi_u) \quad \mbox{and} \quad g(x_k) \leq g(x' +\chi_u),
  \end{equation}
  which together with (i) implies that 
  \[ 
    g(x_k) + g(x') = g(x_k - \chi_u) + g(x' +\chi_u).
  \] 
  If $g(x')<g(x_k-\chi_u)$, then the above equality shows that $g(x_k)>g(x'+\chi_u)$, which contradicts~\eqref{eq:case_i_g}. Therefore, $g(x') = g(x_k -\chi_u)$ and $g(x_k) = g(x'+\chi_u)$ hold, and consequently $x_k - \chi_u \in \mathcal T_{k-1}$.

  Next, assume that (ii) holds. We will show that $b(v)=0$ in this case. 
  Assuming instead that $b(v) = 1$, we obtain $x_k -\chi_u+\chi_v \in \mathcal P_k$ and $x'+\chi_u-\chi_v \in \mathcal P_{k-1}$. Thus, once again from the definitions of $\mathcal T_k$ and $\mathcal T_{k-1}$ we have
  \begin{equation}
    \label{eq:case_ii_g}  
    g(x_k) \leq g(x_k-\chi_u+\chi_v) \quad \mbox{and} \quad g(x') \leq g(x' +\chi_u-\chi_v), 
  \end{equation}
  which together with (ii) implies
  \begin{equation}
    \label{eq:equal_g}
    g(x_k)+g(x')=g(x_k-\chi_u+\chi_v)+g(x'+\chi_u-\chi_v).      
  \end{equation}
  Assuming $g(x_k) < g(x_k -\chi_u +\chi_v)$, the above equality gives $g(x')>g(x'+\chi_u-\chi_v)$, which contradicts~\eqref{eq:case_ii_g}. Thus, we obtain $g(x_k) = g(x_k -\chi_u +\chi_v)$ and $g(x') = g(x' +\chi_u -\chi_v)$, 
  that is, $x' +\chi_u - \chi_v \in \mathcal T_{k-1}$. Since $\norm{x' +\chi_u - \chi_v - x_k} = \norm{x'-x_k} - 2$, it contradicts our choice of $x'$. We then conclude that $b(v) = 0$.

  Since in this case $x'+\chi_u-\chi_v\in \mathcal P_k$ and $x_k -\chi_u +\chi_v \in \mathcal P_{k-1}$, we have
  \[
    g(x_k) \leq g(x'+\chi_u-\chi_v) \quad \mbox{and} \quad g(x') \leq g(x_k-\chi_u+\chi_v).
  \]
  Once again, adding the above two expressions, we obtain an inequality, which together with~(ii) implies~\eqref{eq:equal_g}. By the same argument as before, this leads to $g(x_k) = g(x' +\chi_u -\chi_v)$ and $g(x') = g(x_k -\chi_u +\chi_v)$, and thus, $x_k -\chi_u +\chi_v \in \mathcal T_{k-1}$.

  From the cases (i) and (ii) above, we have shown that there exists a transition
  $(u,v)$ with $u \in E_1$ and $v \in E_0\cup\{z\}$ such that $x_k -\chi_u +\chi_v \in \mathcal T_{k-1}$. Then, for a minimum transition $(u^\ast,v^\ast)$, since 
  \begin{align*}
     \: g(x_k;u^\ast,v^\ast) \leq g(x_k;u,v) 
    \Leftrightarrow & \: g(x_k- \chi_{u^\ast} +\chi_{v^\ast}) - g(x_k) \leq g(x_k -  \chi_u+ \chi_v)  - g(x_k) \\
    \Leftrightarrow & \: g(x_k- \chi_{u^\ast} +\chi_{v^\ast}) \leq g(x_k - \chi_u + \chi_v) 
  \end{align*} 
  holds, we conclude that $x_k - \chi_{u^\ast} +\chi_{v^\ast} \in \mathcal T_{k-1}$.
\end{proof}

The above result shows that given an element $x^k \in \mathcal T_k$ with $\underline k +1 \leq k \leq \bar k$, one can always apply a transition to obtain an element in $\mathcal T_{k-1}$. This means that as we decrease the number $k$ through transitions, $\langle b,x_k\rangle$ decreases monotonically. The result below shows that the costs of the minimum transitions are also non-increasing. It generalizes \cite[Theorem 4.6]{GKS23}, which considers the particular case of a linear function, instead of an M$^\natural$-convex function.

\begin{proposition}
  \label{prop:min_trans_ineq}
  Assume that $\underline{k} + 2 \leq \bar{k}$. \red{For each $k \in \{ \underline k +2 ,\,\ldots,\bar k\}$, let $(u_k,v_k)$ be the minimum transition from $x_k \in \mathcal T_k$ to $x_{k-1} \in \mathcal T_{k-1}$, i.e., $x_{k-1} := x_k -\chi_{u_k} +\chi_{v_k}$. Then,  $g(x_k;u_k,v_k) \leq g(x_{k-1};\,u_{k-1},\,v_{k-1})$ holds.}
\end{proposition}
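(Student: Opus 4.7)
The plan is to unfold the inequality to a discrete midpoint inequality on the optimal value function. Writing $\varphi(j) := g(x_j)$ for the common value of $g$ on $\mathcal{T}_j$, the target $g(x_k; u_k, v_k) \leq g(x_{k-1}; u_{k-1}, v_{k-1})$ rearranges to $\varphi(k-1) - \varphi(k) \leq \varphi(k-2) - \varphi(k-1)$, i.e., $2\varphi(k-1) \leq \varphi(k) + \varphi(k-2)$. So it suffices to establish this discrete convexity of $\varphi$ at level $k-1$.

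To prove it, I would pick $y_k \in \mathcal{T}_k$ and $y_{k-2} \in \mathcal{T}_{k-2}$ (both nonempty by Lemma~\ref{lem:T_k_nonempty}) so as to minimize $\|y_k - y_{k-2}\|_1$, and then choose $u \in \supp(y_k - y_{k-2}) \cap E_1$, which exists because $\langle b, y_k - y_{k-2}\rangle = 2$. The M$^\natural$-convex exchange axiom applied to $(y_k, y_{k-2})$ at $u$ yields one of: (i) the inequality $g(y_k) + g(y_{k-2}) \geq g(y_k - \chi_u) + g(y_{k-2} + \chi_u)$; or, for some $v \in \supp(y_{k-2} - y_k)$, the inequality $g(y_k) + g(y_{k-2}) \geq g(y_k - \chi_u + \chi_v) + g(y_{k-2} + \chi_u - \chi_v)$, which I split further into (ii.a) $v \in E_0$ and (ii.b) $v \in E_1$.

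In cases (i) and (ii.a), both of the produced points lie in $\mathcal{P}_{k-1}$: either $y_k$ drops one $b$-unit while $y_{k-2}$ gains one, or the added $\chi_v$ contributes zero to $\langle b, \cdot\rangle$. Hence each of these two $g$-values is at least $\varphi(k-1)$, and combining with the axiom inequality yields the desired $\varphi(k) + \varphi(k-2) \geq 2\varphi(k-1)$.

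The main obstacle is case (ii.b), where $y_k - \chi_u + \chi_v$ lies in $\mathcal{P}_k$ and $y_{k-2} + \chi_u - \chi_v$ in $\mathcal{P}_{k-2}$, so the axiom by itself does not produce estimates at level $k-1$. I plan to exclude this case using the minimality of $\|y_k - y_{k-2}\|_1$: combining the axiom inequality with the optimality of $y_k$ and $y_{k-2}$ forces equality throughout, whence $\tilde y_k := y_k - \chi_u + \chi_v \in \mathcal{T}_k$; a direct coordinate check then shows $\|\tilde y_k - y_{k-2}\|_1 = \|y_k - y_{k-2}\|_1 - 2$, because both $(y_k - y_{k-2})(u) \geq 1$ and $(y_k - y_{k-2})(v) \leq -1$ move exactly one step closer to zero while all other coordinates are unchanged. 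This contradicts the choice of $(y_k, y_{k-2})$, so case (ii.b) cannot occur, completing the proof.
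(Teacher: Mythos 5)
Your proof is correct, and it shares the paper's key reduction: both arguments boil the claim down to the discrete midpoint inequality $2g(x_{k-1})\leq g(x_k)+g(x_{k-2})$ for the optimal values on consecutive levels. Where you diverge is in how that inequality is established. The paper keeps the concrete point $x_{k-2}=x_k-\chi_{u_k}+\chi_{v_k}-\chi_{u_{k-1}}+\chi_{v_{k-1}}$ produced by the two minimum transitions, so that $\supp(x_k-x_{k-2})\subseteq\{u_k,u_{k-1}\}\subseteq E_1$ and $\supp(x_{k-2}-x_k)\subseteq\{v_k,v_{k-1}\}\subseteq E_0$; every outcome of the exchange axiom applied to this pair then automatically lands both points in $\mathcal{P}_{k-1}$, where they are bounded below by $g(x_{k-1})$ via Proposition~\ref{prop:min_trans}, and no further case analysis is needed. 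You instead take arbitrary representatives $y_k\in\mathcal{T}_k$ and $y_{k-2}\in\mathcal{T}_{k-2}$ at minimum $\ell_1$-distance, which forces you to confront the exchange with $v\in E_1$ and to rule it out by the equality-then-shorter-distance argument (the same device the paper uses inside the proof of Proposition~\ref{prop:min_trans}); that exclusion is sound, since $u$ and $v$ lie in disjoint supports and the swap strictly decreases $\|y_k-y_{k-2}\|_1$ by $2$ while preserving membership in $\mathcal{T}_k$. The trade-off: your version is more self-contained in that it proves midpoint convexity of the value function $\varphi$ without relying on the specific transition-generated witness of $\mathcal{T}_{k-2}$, at the cost of repeating the minimal-distance machinery; the paper's version is shorter because the structure of the double-transition point makes the problematic case vanish. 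One small point worth making explicit in a write-up: the identification $g(x_{k-1};u_{k-1},v_{k-1})=\varphi(k-2)-\varphi(k-1)$ already uses Proposition~\ref{prop:min_trans} at level $k-1$ (valid because $k-1\geq\underline{k}+1$), so your reduction is not free of that proposition either.
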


\begin{proof}
  \red{From the definition of transition costs, we have $g(x_k;u_k,v_k)=g(x_{k-1})-g(x_k)$ and $g(x_{k-1};u_{k-1},v_{k-1})=g(x_{k-2})-g(x_{k-1})$. Hence, it suffices to prove that 
  \begin{equation} \label{eq:g_bound}
      g(x_k)+g(x_{k-2})\geq 2g(x_{k-1})
  \end{equation}
  holds for $k\in\{\underline{k}+2,\ldots,\overline{k}\}$.}
  
We will now obtain a lower bound for the \red{left-hand side of \eqref{eq:g_bound}} by using the M$^\natural$-convexity of $g$.
\red{Recall} that $(x_{k-1},\,x_{k-2}) = (x_k - \chi_{u_k} +\chi_{v_k},\,x_k -\chi_{u_k} +\chi_{v_k} -\chi_{u_{k-1}} +\chi_{v_{k-1}})$. Since $u_{k-1}, u_k \in \supp(x_k-x_{k-2})$ and \red{$\{v_{k-1}, v_k\}=\supp(x_{k-2}-x_k)\cup\{z\}$}, we have
    \[
      g(x_k) + g(x_{k-2}) \geq
      \min\left\{
        \begin{alignedat}{1}
          g(x_k - \chi_{u_{k-1}}) + g(x_k -\chi_{u_k} +\chi_{v_k} +\chi_{v_{k-1}}), \\
          g(x_k - \chi_{u_{k}}) + g(x_k +\chi_{v_k} -\chi_{u_{k-1}} +\chi_{v_{k-1}}), \\          
          g(x_k - \chi_{u_{k-1}} + \chi_{v_k}) + g(x_k -\chi_{u_k} +\chi_{v_{k-1}}), \\
          g(x_k - \chi_{u_{k-1}} + \chi_{v_{k-1}}) + g(x_k -\chi_{u_k} +\chi_{v_k})
        \end{alignedat}
      \right\}.
    \]
  From the definition of transition, observe that the term $x_k - \chi_{u_{k-1}}$, for instance, belong to~$\mathcal P_{k-1}$. Similarly, all the arguments of~$g$ appearing on the right-hand side of the \red{above inequality} belong to~$\mathcal P_{k-1}$. Since $x_{k-1} \in \mathcal T_{k-1}$, which is guaranteed from Proposition~\ref{prop:min_trans}, 
 we obtain \eqref{eq:g_bound} from the definition of~$\mathcal T_{k-1}$.
\end{proof}

The following result shows that \red{in the process of} decreasing $k$ through transitions, once the objective value $g$ increases, it continues to increase, indicating the emergence of Pareto optimal points. 

\begin{theorem}
  \label{theo:Pareto_g}
  Let $\{x_k\}_{k=\underline k}^{\bar k}$ with $x_k \in \mathcal T_k$ be obtained by minimum transitions and assume that $\underline k+2 \leq \bar k$. Suppose that there exists an index $j \in \{\underline k +1,\ldots,\bar k\}$ such that $g(x_j) < g(x_{j-1})$. Then, for all $i \in \{ \underline k +1,\ldots,j\}$, we have $g(x_i) < g(x_{i-1})$.
\end{theorem}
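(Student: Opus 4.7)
The plan is to recognize this theorem as an almost immediate consequence of Proposition~\ref{prop:min_trans_ineq} after a sign-flip reinterpretation: by construction $x_{k-1} = x_k - \chi_{u_k} + \chi_{v_k}$, so the minimum transition cost satisfies $g(x_k;u_k,v_k) = g(x_{k-1}) - g(x_k)$. Setting $c_k := g(x_k;u_k,v_k)$ for $k \in \{\underline{k}+1,\ldots,\bar{k}\}$, the assertion ``$g(x_i) < g(x_{i-1})$'' is equivalent to ``$c_i > 0$'', so the theorem is really a claim about signs of a one-dimensional sequence.

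First I would translate the hypothesis $g(x_j) < g(x_{j-1})$ as $c_j > 0$, and rewrite the conclusion as $c_i > 0$ for every $i \in \{\underline{k}+1,\ldots,j\}$. Then I would invoke Proposition~\ref{prop:min_trans_ineq}, which gives the monotonicity $c_k \leq c_{k-1}$ for every $k \in \{\underline{k}+2,\ldots,\bar{k}\}$; in other words, transition costs are non-decreasing as the index $k$ decreases. Chaining these inequalities from $k=j$ down to $k=i+1$ then yields
\[
  c_i \;\geq\; c_{i+1} \;\geq\; \cdots \;\geq\; c_j \;>\; 0,
\]
and the boundary case $i = j$ is immediate from the hypothesis. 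Re-expressing $c_i > 0$ as $g(x_i) < g(x_{i-1})$ finishes the argument.

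The only point requiring minor care is range-checking: each link $c_\ell \geq c_{\ell+1}$ comes from applying Proposition~\ref{prop:min_trans_ineq} with $k = \ell+1$, which is legitimate exactly when $\underline{k}+1 \leq \ell \leq \bar{k}-1$. Since we only use this for $\ell \in \{i,\ldots,j-1\}$, the constraints $i \geq \underline{k}+1$ and $j \leq \bar{k}$ from the theorem statement provide exactly what is needed, and the standing assumption $\underline{k}+2 \leq \bar{k}$ ensures Proposition~\ref{prop:min_trans_ineq} itself is applicable. There is essentially no conceptual obstacle: all the real work has already been absorbed into Proposition~\ref{prop:min_trans_ineq}, and the present theorem is a clean monotonicity corollary.
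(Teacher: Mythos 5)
Your proof is correct and follows essentially the same route as the paper: both arguments reduce the claim to the chain $g(x_i;u_i,v_i) \geq \cdots \geq g(x_j;u_j,v_j) = g(x_{j-1})-g(x_j) > 0$ obtained by iterating Proposition~\ref{prop:min_trans_ineq}. Your explicit range-checking of where the proposition may be applied is a welcome bit of extra care, but the substance is identical.
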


\begin{proof}
  For each $i \in \{ \underline k +1,\ldots,j\}$, it follows from Proposition~\ref{prop:min_trans_ineq} that 
  \begin{align*}
    g(x_{i-1}) - g(x_i) &= g(x_i;\, u_i,\,v_i) \\
    &\geq g(x_j;\, u_j,\,v_j) \\
    &= g(x_j -\chi_{u_j} +\chi_{v_j}) - g(x_j)\\
    &= g(x_{j-1}) - g(x_j) >0,
  \end{align*}
  which completes the proof.
\end{proof}

From the above result, we can derive the following theorem related to the Pareto frontier. Note that it extends the results of~\cite[Theorem~4.7]{GKS23}.

\begin{theorem}
  \label{theo:Pareto}
  Let $\{x_k\}_{k=\underline k}^{\bar k}$ with $x_k \in \mathcal T_k$ be obtained by minimum transitions and assume that $\underline k+1 \leq \bar k$. Suppose that there exists an index $j \in \{ \underline k +1,\ldots,\bar k\}$ such that $g(x_j) < g(x_{j-1})$. Then, each element of $\{ x_k \}_{k=\underline k}^j$ is a supported Pareto optimal point of~\eqref{prob:MnatBB}.
\end{theorem}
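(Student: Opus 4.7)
I will establish two properties for each $x_k$ with $k \in \{\underline k, \ldots, j\}$: Pareto optimality, and minimality of $g(x) + \mu_k \langle b, x\rangle$ for some $\mu_k \geq 0$, which together yield supported Pareto optimality.

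For Pareto optimality, I would suppose for contradiction that some $y \in \dom g$ dominates $x_k$, and set $k' := \langle b, y\rangle$, so necessarily $k' \leq k$. If $k' = k$, then $y \in \mathcal P_k$ together with $x_k \in \mathcal T_k$ gives $g(y) \geq g(x_k)$, which combined with $g(y) \leq g(x_k)$ forces $g(y) = g(x_k)$, contradicting strict dominance. If $k' < k$, then $k' \in \{\underline k, \ldots, k-1\} \subseteq \{\underline k, \ldots, j-1\}$ (the lower bound follows because $\mathcal P_{k'}$ is non-empty and the range of $\langle b, \cdot\rangle$ on $\dom g$ is the integer interval $[\underline k, \bar k]$ by the M$^\natural$-convex structure). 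Theorem~\ref{theo:Pareto_g} then yields the chain $g(x_{k'}) > g(x_{k'+1}) > \cdots > g(x_k)$, and since $x_{k'} \in \mathcal T_{k'}$ we have $g(y) \geq g(x_{k'}) > g(x_k)$, a contradiction.

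For supportedness, I would exploit Proposition~\ref{prop:min_trans_ineq}: the quantities $c_i := g(x_{i-1}) - g(x_i) = g(x_i; u_i, v_i)$ are non-increasing in $i \in \{\underline k+1, \ldots, \bar k\}$, so the one-dimensional sequence $i \mapsto g(x_i)$ is discretely convex. For each $k \in \{\underline k, \ldots, j\}$, I would pick $\mu_k$ in the ``subdifferential interval'' $[c_{k+1}, c_k]$ (with conventions $c_{\underline k} := +\infty$ and $c_{\bar k+1} := -\infty$). Non-emptiness follows from monotonicity of $c_i$, and a strictly positive choice is possible because Theorem~\ref{theo:Pareto_g} gives $c_i > 0$ for $i \in \{\underline k+1, \ldots, j\}$; concretely, one may take $\mu_k := c_k$ for $k \geq \underline k+1$ and $\mu_{\underline k} := c_{\underline k+1}$. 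By discrete convexity, $k$ is then a global minimizer of $i \mapsto g(x_i) + \mu_k i$ over $\{\underline k, \ldots, \bar k\}$. To lift this to a minimum over all of $\dom g$, let $y \in \dom g$ be arbitrary and set $i := \langle b, y\rangle$; then $g(y) \geq g(x_i)$ because $x_i \in \mathcal T_i$ by Lemma~\ref{lem:T_k_nonempty}, whence
\[
  g(y) + \mu_k \langle b, y\rangle \geq g(x_i) + \mu_k i \geq g(x_k) + \mu_k k .
\]
Thus $x_k$ minimizes the non-negative convex combination $g(x) + \mu_k \langle b, x\rangle$, so it is a supported Pareto optimal point of~\eqref{prob:MnatBB}.

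The main obstacle I expect lies in handling the ``right tail'' $i > j$ with care: for such $i$ we have no direct information about the sign of $c_i$, only that $c_i \leq c_j$, so the values $g(x_i)$ could continue to decrease past $j$. The reason the argument still goes through is precisely the global discrete convexity of $i \mapsto g(x_i)$ on the full range $\{\underline k, \ldots, \bar k\}$: placing $\mu_k$ in the subdifferential at $k$ forces $k$ to be a \emph{global} minimizer, not merely a local one. The conventions $c_{\underline k} := +\infty$ and $c_{\bar k+1} := -\infty$ are then what is needed to absorb the boundary cases $k = \underline k$ and (vacuously) $k = \bar k$ uniformly.
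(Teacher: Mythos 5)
Your proof is correct, and its first half is essentially the paper's argument: assume a dominating point $y$, place it in some $\mathcal P_{k'}$ with $k'\le k$, and use Theorem~\ref{theo:Pareto_g} together with $x_{k'}\in\mathcal T_{k'}$ to get $g(y)\ge g(x_{k'})>g(x_k)$, a contradiction. Where you genuinely diverge is in the supportedness half. The paper disposes of it in one line — if $x_k$ were non-supported one would need $g(x_{k-1})-g(x_k)<g(x_k)-g(x_{k+1})$, contradicting Proposition~\ref{prop:min_trans_ineq} — and states this only for $k\in\{\underline k+1,\ldots,j-1\}$, leaving the endpoints $k=\underline k$ and $k=j$ implicit. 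You instead make the underlying convexity explicit: Proposition~\ref{prop:min_trans_ineq} makes $i\mapsto g(x_i)$ discretely convex, so for each $k$ a strictly positive weight $\mu_k\in[c_{k+1},c_k]$ exists for which $x_k$ minimizes $g+\mu_k\langle b,\cdot\rangle$ over the profile, and the lifting step $g(y)\ge g(x_{\langle b,y\rangle})$ extends this to all of $\dom g$. This buys a uniform treatment of the endpoints and of the degenerate case $\underline k+1=\bar k$ (where Proposition~\ref{prop:min_trans_ineq} is vacuous and the paper argues separately), and it certifies supportedness in the standard weighted-sum sense rather than via the geometric boundary description. Two cosmetic points: $x_i\in\mathcal T_i$ for all $i$ is guaranteed by Proposition~\ref{prop:min_trans}, not by Lemma~\ref{lem:T_k_nonempty} (which only gives non-emptiness of $\mathcal T_i$); and your claim that $\langle b,\cdot\rangle$ takes exactly the values $\underline k,\ldots,\bar k$ on $\dom g$ rests on the same implicit attainment assumption ($\mathcal P_i\neq\varnothing\Rightarrow\mathcal T_i\neq\varnothing$) that the paper itself uses when it asserts $k'\in\{\underline k,\ldots,k-1\}$.
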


\begin{proof}
  If $\underline k+1 = \bar k$, then $j$ is equal to $\bar{k}$. It follows from $g(x_{\bar k}) < g(x_{\underline k})$ and $\langle b, x_{\bar k}\rangle > \langle b,x_{\underline k}\rangle$ that $x_{\underline k}$ and $x_{\bar k}$ are both supported Pareto optimal.

  Let us consider the case that $\underline k+2 \leq \bar k$. Assume, for the purpose of contradiction, that $x_k$ is not Pareto optimal for some $k \in \{ \underline k,\ldots,j\}$. In this case, although there exists an $x' \in \mathcal{P}$ that dominates $x_k$, \red{it follows from Proposition \ref{prop:min_trans} that} the point $x_k$ attains the minimum value of $g$ under the constraint $\langle b,x\rangle = k$, and thus
  \[ 
    \langle b,x'\rangle < k \quad \mbox{and} \quad g(x') \leq g(x_k)
  \]
  hold. Now, if $\langle b,x'\rangle =k'$, then the first inequality above gives $k' \in \{ \underline k,\ldots, k-1\}$. From Theorem~\ref{theo:Pareto_g}, we have $g(x_k) < g(x')$, which contradicts the second inequality above. Therefore, $x_k$ is Pareto optimal.

   \red{We now intend to show} that $x_k$ is supported Pareto optimal, with $k \in \{ \underline{k}+1,\dots,j-1\}$. \red{By Proposition~\ref{prop:min_trans}, the point $x_k$ attains the minimum value of $g$ under the constraint $\inner{b}{x}=k$. Then Proposition~\ref{prop:min_trans_ineq} and Theorem~\ref{theo:Pareto_g} imply that $g(x_{k-1})-g(x_k)\geq g(x_k)-g(x_{k+1})>0$ holds. Thus, $x_k$ is supported Pareto optimal for $k \in \{ \underline{k}+1,\dots,j-1\}$.}
\end{proof}

The last theorem shows that problem~\eqref{prob:MnatBB} has a particular Pareto optimal value set, corresponding only to supported Pareto optimal points. From the above discussions, we can observe in Fig.~\ref{fig:border_points}, how Pareto optimal solutions can be obtained. 

\begin{figure}[ht]
  \begin{center}
  \begin{tikzpicture}
    \draw[-{Latex}, thick] (-0.5,0) -- (5,0);          
    \node at (5.5,0) {$g(x)$};                         
    \draw[-{Latex}, thick] (0,-0.5) -- (0,5);          
    \node at (-0.7,4.8) {$\inner{b}{x}$};              
    \draw[dashed] (0,0.75) -- (4.8,0.75);              
    \node at (-0.6,0.75) {$k-2$};                      
    \node at (3.7,1) {$x_{k-2}$};                      
    \draw[ultra thick, red] (3.25,0.75) -- (1.75,1.5); 
    \filldraw (3.25,0.75) circle (2pt);                
    \draw[dashed] (0,1.5) -- (4.8,1.5);                
    \node at (-0.6,1.5) {$k-1$};                       
    \draw[fill=white] (4,1.5) circle (2pt);            
    \node at (2.2,1.75) {$x_{k-1}$};                   
    \draw[ultra thick, red] (1.75,1.5) -- (1,2.25);    
    \filldraw (1.75,1.5) circle (2pt);                 
    \draw[dashed] (0,2.25) -- (4.8,2.25);              
    \node at (-0.3,2.25) {$k$};                        
    \filldraw (1,2.25) circle (2pt);                   
    \draw[fill=white] (3.25,2.25) circle (2pt);        
    \node at (1.4,2.5) {$x_k$};                        
    \draw[thick] (1,2.25) -- (1,3);                    
    \draw[dashed] (0,3) -- (4.8,3);                    
    \node at (-0.6,3) {$k+1$};                         
    \filldraw (1,3) circle (2pt);                      
    \draw[fill=white] (2.5,3) circle (2pt);            
    \draw[fill=white] (3.75,3) circle (2pt);           
    \draw[thick] (1,3) -- (1.75,3.75);                 
    \draw[dashed] (0,3.75) -- (4.8,3.75);              
    \node at (-0.6,3.75) {$k+2$};                      
    \filldraw (1.75,3.75) circle (2pt);                
    \draw[fill=white] (3.25,3.75) circle (2pt);        
    \draw[fill=white] (4,3.75) circle (2pt);           
    \draw[thick] (1.75,3.75) -- (4,4.5);               
    \draw[dashed] (0,4.5) -- (4.8,4.5);                
    \filldraw (4,4.5) circle (2pt);                    
  \end{tikzpicture}
  \caption{The path through points in $\mathcal{T}_k$. Each black dot represents $(g(x_k),\inner{b}{x_k})$ that corresponds to $x_k \in \mathcal{T}_k$, while the white dots correspond to other feasible points. The red thick line is the path through the Pareto optimal points.}
  \label{fig:border_points}
  \end{center}
\end{figure}

The main idea of the algorithm is to generate the sequence $\{x_k\}$ by applying minimum transitions at each iteration. However, it is not necessary to start this procedure from $x_{\bar{k}}$. In fact, $x_j \in \mathcal T_j$ as in Theorem~\ref{theo:Pareto} can be characterized as the lexicographically optimal solution under the preference order of $g$ over $b$. Gorski et al.~\cite{GKS23} made a similar statement for biobjective linear optimization on matroids. 

Thus, the algorithm starts with finding a global minimizer of $g$. This can be done with polynomial-time algorithms for M$^\natural$-convex function minimization \cite{Shio04,Tam05}. Let $R(x)$ denote the set of transitions with respect to $x\in\dom g$. The algorithm selects a transition $(u,v)$ that minimizes $g(x;u,v)$.
If $g(x;u,v)=0$, then $x$ is dominated by $x-\chi_u+\chi_v$, which means that $x$ is not a Pareto optimal solution. If $g(x;u,v)>0$, then $x$ is a Pareto optimal solution. In either case, the algorithm updates $x$ to $x-\chi_u+\chi_v$. This decreases $\langle b,x\rangle$ by one. The algorithm repeats this process until $R(x)$ becomes empty. 



Based on the above theoretical background, we present the overview of the method in Algorithm~\ref{alg:MnatBB}. Let $T_{\mathrm{M}^\natural}$ denote the time required for a single evaluation of $g$. The M$^\natural$-convex function minimization can be done in 
$\mathcal O(|E|^3T_{\mathrm{M}^\natural} \log\frac{L}{|E|})$ time \cite{Shio04,Tam05}, 
where 
$$L:=\max\{\|x-y\|_\infty\mid x,y\in\dom g\}.$$ 
The number of iterations of the while-loop w.r.t. $k$ is $\mathcal{O}(K)$, where $K:=\overline{k}-\underline{k}$. Moreover, the line~4 requires $\mathcal{O}(|E|^2)$ evaluations of~$g$. Thus, the computational time of the iterations is $\mathcal{O}(|E|^2KT_{\mathrm{M}^\natural})$. Since \red{$K=\mathcal{O}(|E|L)$, 
the overall running time is $\mathcal{O}(|E|^3LT_{\mathrm{M}^\natural})$.} 

In the special cases where $\dom g \subseteq \{0,1\}^E$, \red{the number of iterations $K$ can be bounded by $r:=\max\{\|x\|_1\mid x\in\dom g\}$.}  
The initial minimization of $g$ can be done in \red{$O(|E|^2T_{\mathrm{M}^\natural})$} time, which is dominated by the iterations. Thus, the algorithm runs in \red{$\mathcal{O}(r|E|^2T_{\mathrm{M}^\natural})$} time in the $0$-$1$ cases.

\begin{algorithm}[H]
	\caption{M$^\natural$BB algorithm}
	\label{alg:MnatBB}
	\begin{algorithmic}[1]
	\REQUIRE An M$^\natural$ convex function $g$, and a binary linear function $\inner{b}{\cdot}$. 
	\ENSURE Complete set $\mathcal X_{\mathrm{cP}}$ of Pareto optimal values. 
	\STATE Find an $x\in \dom g$ that minimizes $g(x)$. 
        \STATE $k\gets\langle b,x \rangle$, $\mathcal  X_{\mathrm{cP}}\leftarrow \varnothing$.  
    \WHILE{$R(x)\neq\varnothing$}
    \STATE Find a transition $(u,v)\in R(x)$ 
    that minimizes $g(x;u,v)$. 
    \IF{$g(x;u,v)>0$}
    \STATE $\mathcal X_{\mathrm{cP}} \leftarrow \mathcal X_{\mathrm{cP}} \cup \{(g(x),k)\}$
    \ENDIF
    \STATE $x \leftarrow x -\chi_u+\chi_v$
	\STATE $k \leftarrow k-1$
	\ENDWHILE
    \STATE $\mathcal X_{\mathrm{cP}} \leftarrow \mathcal X_{\mathrm{cP}} \cup \{(g(x),k)\}$
    \RETURN $\mathcal X_{\mathrm{cP}}$
    \end{algorithmic}
\end{algorithm}


\subsection*{Application to branching with arc and root costs}
\label{sec:AND}
We now provide an example of biobjective optimization with M$^\natural$-convex and binary linear functions.

Let $G=(V,A)$ be a directed graph with a vertex set $V$ and an arc set $A$. Consider a source set $S\subseteq V$, an arc cost function $c\colon A\to\Re_+$, and a source cost function $b\colon S\to\{0,1\}$. We intend to select a pair of subsets $F\subseteq A$ and $R\subseteq S$ so that each vertex $v\in V$ is reachable from 
some source in $R$ 
in the subgraph $H=(V,F)$. How can we minimize the costs of this selection measured by $c(F):=\sum_{a\in F}c(a)$ and $b(R):=\sum_{s\in R}b(s)$?  
This problem can be formulated as a biobjective optimization problem in our framework as follows.

A subset $F\subseteq A$ is called a branching if $F$ contains no undirected cycle and at most one arc entering $v$ for each vertex $v\in V$. The root of a branching $F$ denoted by $R(F)$ is the set of vertices that is not entered by any arc in $F$. Due to the nonnegativity of the arc costs, the biobjective optimization problem above is tantamount to finding a branching $F$ with $R(F)\subseteq S$ minimizing $c(F)$ and $b(R(F))$ simultaneously. An example of this problem is shown in Fig.~\ref{fig:example-M-nat}.

For a specified source subset $R\subseteq S$, let $\gamma(R)$ denote 
the minimum cost $c(F)$ among all branchings $F\subseteq A$ with $R(F)=R$. 
We now consider a function $g\colon\Z^S\to\overline{\Re}$ defined by 
$g(x):=\gamma(R)$ for $x=\chi_R$ and $g(x):=\infty$ otherwise. 
Takazawa~\cite{Tak12,Tak14} observed that the function $g$ thus defined is an M$^\natural$-convex function and utilized this property for investigating the optimal matching forest and shortest bibranching problems.

Given a terminal subset $R\subseteq S$, one can compute $\gamma(R)$ by shrinking $R$ into a single vertex $r$ and finding a shortest $r$-arborescence in the resulting directed graph. This can be done in $\mathcal{O}(|A|+|V|\log |V|)$ time \cite{GGST86}. Therefore, our algorithm enumerates all the Pareto optimal values in $\mathcal{O}(|S|^3(|A|+|V|\log |V|))$ time in this setting. 

\begin{figure}[t!]
  \centering
  \begin{tabular}{@{}c@{\hspace{5pt}}c@{\hspace{5pt}}c@{}}
  
    \begin{tikzpicture}[
      >=Stealth,
      vertex/.style={circle, draw, thick, minimum size=12pt, inner sep=0pt, font=\small},
      edge/.style={->, thick},
      weight/.style={font=\small}
      ]
      \node[vertex, fill=red!30] (4) at (-0.9,0.75) {4};
      \node[vertex] (5) at (-0.9,-0.75) {5};
      \node[vertex] (1) at (0,0) {1};
      \node[vertex, fill=red!30] (2) at (1.5,0) {2};
      \node[vertex] (3) at (2.4,0.75) {3};
      \node[vertex, fill=red!30] (6) at (2.4,-0.75) {6};
      \draw[edge] (1) -- node[weight, pos=0.35, above] {1} (4);
      \draw[edge] (4) -- node[weight, pos=0.4, left] {1} (5);
      \draw[edge] (1) -- node[weight, pos=0.35, below] {5} (5);       
      \draw[edge, bend left=25] (1) to node[weight, above] {2} (2);
      \draw[edge, bend left=25] (2) to node[weight, below] {3} (1);
      \draw[edge] (2) -- node[weight, pos=0.35, above] {5} (3);
      \draw[edge] (2) -- node[weight, pos=0.35, below] {1} (6);
      \draw[edge] (6) -- node[weight, pos=0.4, right] {1} (3);
    \end{tikzpicture}
    
    &    
    &
    
    \begin{tikzpicture}[
      >=Stealth,
      vertex/.style={circle, draw, thick, minimum size=12pt, inner sep=0pt, font=\small},
      edge/.style={->, thick},
      weight/.style={font=\small}
      ]   
      \node[vertex] (4) at (-0.9,0.75) {4};
      \node[vertex] (5) at (-0.9,-0.75) {5};
      \node[vertex] (1) at (0,0) {1};
      \node[vertex, fill=blue!20] (2) at (1.5,0) {2};
      \node[vertex] (3) at (2.4,0.75) {3};
      \node[vertex] (6) at (2.4,-0.75) {6};      
      \draw[edge] (1) -- node[weight, pos=0.35, above] {1} (4);
      \draw[edge] (4) -- node[weight, pos=0.4, left] {1} (5);
      \draw[edge, bend left=25] (2) to node[weight, below] {3} (1);
      \draw[edge] (2) -- node[weight, pos=0.35, below] {1} (6);
      \draw[edge] (6) -- node[weight, pos=0.4, right] {1} (3);
    \end{tikzpicture}

    \\
    
    \begin{tabular}{c} 
      Original graph: \\
      $b(2)=1$, \\ 
      $b(4)=0$, $b(6)=1$ 
    \end{tabular} 
    & 
    \begin{tabular}{c}
      Cases $R=\{4\}, \{6\}$ \\ and $\{4,6\}$:\\ 
      Infeasible
    \end{tabular} 
    &
    \begin{tabular}{c} 
      Case $R=\{2\}$: \\ $c(F)=7$, $b(R(F))=1$ \\ Not Pareto optimal
    \end{tabular}

    \\[20pt]

    \begin{tikzpicture}[
      >=Stealth,
      vertex/.style={circle, draw, thick, minimum size=12pt, inner sep=0pt, font=\small},
      edge/.style={->, thick},
      weight/.style={font=\small}
      ]   
      \node[vertex, fill=blue!20] (4) at (-0.9,0.75) {4};
      \node[vertex] (5) at (-0.9,-0.75) {5};
      \node[vertex] (1) at (0,0) {1};
      \node[vertex, fill=blue!20] (2) at (1.5,0) {2};
      \node[vertex] (3) at (2.4,0.75) {3};
      \node[vertex] (6) at (2.4,-0.75) {6};   
      \draw[edge] (4) -- node[weight, pos=0.4, left] {1} (5);
      \draw[edge, bend left=25] (2) to node[weight, below] {3} (1);
      \draw[edge] (2) -- node[weight, pos=0.35, below] {1} (6);
      \draw[edge] (6) -- node[weight, pos=0.4, right] {1} (3);
    \end{tikzpicture}

    &
    
    \begin{tikzpicture}[
      >=Stealth,
      vertex/.style={circle, draw, thick, minimum size=12pt, inner sep=0pt, font=\small},
      edge/.style={->, thick},
      weight/.style={font=\small}
      ]   
      \node[vertex] (4) at (-0.9,0.75) {4};
      \node[vertex] (5) at (-0.9,-0.75) {5};
      \node[vertex] (1) at (0,0) {1};
      \node[vertex, fill=blue!20] (2) at (1.5,0) {2};
      \node[vertex] (3) at (2.4,0.75) {3};
      \node[vertex, fill=blue!20] (6) at (2.4,-0.75) {6};
      \draw[edge] (1) -- node[weight, pos=0.35, above] {1} (4);
      \draw[edge] (4) -- node[weight, pos=0.4, left] {1} (5);
      \draw[edge, bend left=25] (2) to node[weight, below] {3} (1);
      \draw[edge] (6) -- node[weight, pos=0.4, right] {1} (3);
    \end{tikzpicture}

    &

    \begin{tikzpicture}[
      >=Stealth,
      vertex/.style={circle, draw, thick, minimum size=12pt, inner sep=0pt, font=\small},
      edge/.style={->, thick},
      weight/.style={font=\small}
      ]         
      \node[vertex, fill=blue!20] (4) at (-0.9,0.75) {4};
      \node[vertex] (5) at (-0.9,-0.75) {5};
      \node[vertex] (1) at (0,0) {1};
      \node[vertex, fill=blue!20] (2) at (1.5,0) {2};
      \node[vertex] (3) at (2.4,0.75) {3};
      \node[vertex, fill=blue!20] (6) at (2.4,-0.75) {6};
      \draw[edge] (4) -- node[weight, pos=0.4, left] {1} (5);
      \draw[edge, bend left=25] (2) to node[weight, below] {3} (1);
      \draw[edge] (6) -- node[weight, pos=0.4, right] {1} (3);
    \end{tikzpicture}

    \\

    \begin{tabular}{c} 
      Case $R=\{2,4\}$: \\
      $c(F)=6$, $b(R(F))=1$ \\
      Pareto optimal
    \end{tabular} &
    \begin{tabular}{c} 
      Case $R=\{2,6\}$: \\
      $c(F)=6$, $b(R(F))=2$ \\
      Not Pareto optimal
    \end{tabular} &
    \begin{tabular}{c} 
      Case $R=\{2,4,6\}$: \\
      $c(F)=5$, $b(R(F))=2$ \\
      Pareto optimal
    \end{tabular} 
  \end{tabular}
  \caption{\blue{Example of a branching problem with arc and root costs. We consider $V = \{1,2,3,4,5,6\}$, and the arc costs are indicated on the corresponding arcs of the graph. The subset of vertices $S=\{2,4,6\}$ is indicated in red.  We illustrate the minimal-cost branchings for each~$R \subseteq S$, shown in blue. In this case, we have two Pareto optimal solutions.}}
  \label{fig:example-M-nat}
\end{figure}


\section{Biobjective optimization with M-convex and binary linear functions}
\label{sec:MBB}

Let $h:\Z^E\to\overline{\Re}$ be an M-convex function and 
$b:E\to\{0,1\}$ be a binary function. Recall that 
$\langle b,x\rangle:=\sum_{e\in E}b(e)x(e)$ for any $x\in\Z^E$. 
In this section, we consider the following biobjective problem:
\begin{align}
  \min \quad & (h(x),\langle b,x\rangle) \label{prob:MBB} \tag{MBB}\\
  \mbox{s.t.} \quad &x \in \dom h. \nonumber 
\end{align}
Since all M-convex functions are M$^\natural$-convex, this problem is a particular case of~\eqref{prob:MnatBB}. Yet, it is more general than~\eqref{prob:LBB}. We will show that \eqref{prob:MBB} admits a more efficient algorithm than the one presented in 
Section~\ref{sec:MnatBB} for general \eqref{prob:MnatBB}.

Similarly to the previous section, we consider the following partition of $\Q:=\dom h$: 
\[
  \Q_i := \{ x \in \Q \mid \langle b,x\rangle = i\}.
\]
The set of optimal solutions with respect to $h$ is defined by
\[
  \mathcal S_i := \{ x \in \Q_i \mid  h(x) \leq h(y) 
  \mbox{ for all } y \in \mathcal Q_i \},
\]
which corresponds to the set of optimal solutions of the following subproblem:

\begin{align}
  \min \quad & h(x) \label{prob:MBBi} \tag{MBB$_{=i}$} \\
  \mbox{s.t.} \quad & x \in \Q, \nonumber \\
  & \langle b,x \rangle = i. \nonumber
\end{align}
We also define the indices below:
\[
  \underline{k} := \min \{ k \mid \mathcal S_k \neq \varnothing\} \quad
  \mbox{and} \quad \bar k := \max \{ k \mid \mathcal S_k \neq \varnothing\}.
\]
The transition used here is the same as in Definition~\ref{def:transition}, with $\mathcal{P}$ replaced by $\mathcal{Q}$. However, in the case of M-convex functions, the dummy symbol~$z$ is no longer necessary. The following proposition is an extension of \cite[Corollary 3.1]{GT84} from Gabow and Tarjan to the case of M-convex functions.

\begin{proposition}
  \label{prop:MBB}
  For any $x\in \mathcal S_i$ and $x'\in \mathcal S_{\underline k}$ with $i \in \{\underline k+1,\ldots,\bar k\}$ and $\underline k+1 \leq \bar k$, there exists a minimum transition $(u,v)$ with respect to $x$ such that $u\in\supp(x-x')$ and $v\in\supp(x'-x)$.
\end{proposition}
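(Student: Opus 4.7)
The plan is to induct on $\|x-x'\|_1$, combining the M-convex exchange axiom with the optimality of $x$ and $x'$ and the monotonicity of minimum transition costs established in Proposition~\ref{prop:min_trans_ineq}.

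Since $\dom h$ is an M-convex set, $\sum_e x(e)=\sum_e x'(e)$, which together with $\langle b,x-x'\rangle=i-\underline k>0$ gives $\sum_{e\in E_1}(x-x')(e)=i-\underline k=\sum_{e\in E_0}(x'-x)(e)$. Hence both $\supp(x-x')\cap E_1$ and $\supp(x'-x)\cap E_0$ are nonempty, and $\|x-x'\|_1\geq 2(i-\underline k)$. Picking $u\in\supp(x-x')\cap E_1$ and applying the exchange axiom of $h$ yields $v\in\supp(x'-x)$ with
\[ h(x)+h(x')\geq h(x-\chi_u+\chi_v)+h(x'+\chi_u-\chi_v). \]

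If $v\in E_1$, then $x-\chi_u+\chi_v\in\Q_i$ and $x'+\chi_u-\chi_v\in\Q_{\underline k}$; optimality of $x$ and $x'$ in these classes forces the exchange to be tight, so $x'':=x'+\chi_u-\chi_v\in\mathcal S_{\underline k}$ with $\|x-x''\|_1=\|x-x'\|_1-2$. A direct check shows $\supp(x-x'')\subseteq\supp(x-x')$ and $\supp(x''-x)\subseteq\supp(x'-x)$, so the inductive hypothesis on $\|x-x'\|_1$ applied to $(x,x'')$ yields a minimum transition valid for $(x,x')$. This reduces everything to the base case $\|x-x'\|_1=2(i-\underline k)$, in which $\supp(x-x')\subseteq E_1$ and $\supp(x'-x)\subseteq E_0$, so $v$ is automatically in $E_0$ and $(u,v)$ is a valid transition in the required supports.

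To conclude that $(u,v)$ can be chosen to be a minimum transition in this base case, I run a nested induction on $i-\underline k$. When $i-\underline k=1$, $x'=x-\chi_u+\chi_v\in\mathcal S_{i-1}$, and since any transition from $x$ reaches $\Q_{i-1}$ with value $\geq h(x')$, the pair $(u,v)$ attains the minimum cost $h(x')-h(x)$ and is a minimum transition. When $i-\underline k\geq 2$, applying the nested hypothesis to each $(y_j,x')$, whose gap $(i-j)-\underline k$ is strictly smaller than $i-\underline k$, builds a chain $x=y_0\to y_1\to\cdots\to y_{i-\underline k}=x'$ in which each $y_j\in\mathcal S_{i-j}$ is reached by a minimum transition with endpoints in the required supports; in particular the first step $(u_0,v_0)$ is a minimum transition at $x$ with $u_0\in\supp(x-x')$ and $v_0\in\supp(x'-x)$.

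The main obstacle I expect is coordinating the nested induction so that the chain actually terminates at $x'$ with each $y_j$ in $\mathcal S_{i-j}$: at every level, the nested hypothesis must deliver a minimum transition whose landing point remains in the base-case regime $\|y_{j+1}-x'\|_1=2((i-j-1)-\underline k)$ with respect to the fixed $x'$. Here the support containment $\supp(y_{j+1}-x')\subseteq\supp(y_j-x')$, which follows from the same direct check as in the reduction step, ensures the chain stays within the base case throughout, so that the telescoped sum of minimum transition costs across levels matches $h(x')-h(x)$ by Proposition~\ref{prop:min_trans_ineq} and forces the first step to realize the minimum transition cost at $x$.
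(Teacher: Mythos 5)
Your proof has a genuine gap at its core: the nested induction in the base case is circular. When $i-\underline k\geq 2$ you invoke the inductive hypothesis for the pairs $(y_j,x')$ ``whose gap $(i-j)-\underline k$ is strictly smaller than $i-\underline k$,'' but the chain has to start at $y_0=x$, and producing $y_1\in\mathcal S_{i-1}$ by a minimum transition whose endpoints lie in $\supp(x-x')$ and $\supp(x'-x)$ is exactly the statement being proved for the pair $(x,x')$, whose gap is not smaller. The only candidate you actually construct for that first step is the pair $(u,v)$ returned by the exchange axiom, but for it you only know $h(x)+h(x')\geq h(x-\chi_u+\chi_v)+h(x'+\chi_u-\chi_v)$, and since $x'+\chi_u-\chi_v$ lies in $\Q_{\underline k+1}$ rather than $\Q_{\underline k}$, the optimality of $x'$ gives you no tightness: you cannot conclude $x-\chi_u+\chi_v\in\mathcal S_{i-1}$, hence cannot conclude $(u,v)$ is a minimum transition, and cannot even start the chain. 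The closing appeal to Proposition~\ref{prop:min_trans_ineq} does not repair this; that proposition compares minimum-transition costs across consecutive levels of an already-constructed sequence of optima and says nothing about whether your particular $(u,v)$ attains the minimum cost at $x$. Only the case $i-\underline k=1$ of your argument is sound.

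There is a second, smaller gap in the reduction to the base case: you pick $u\in\supp(x-x')\cap E_1$ and the exchange axiom hands you \emph{some} $v\in\supp(x'-x)$ that you do not get to choose. If $\|x-x'\|_1>2(i-\underline k)$ but the returned $v$ happens to lie in $E_0$, your reduction (which needs $v\in E_1$) does not fire, and you are left with a transition in the right supports but with no minimality argument. The paper's proof avoids both problems by running the argument in the opposite direction: it starts from an arbitrary minimum transition $(s,t)$ at $x$, whose existence is clear since $i>\underline k$, and applies the exchange axiom twice---once between $x-\chi_s+\chi_t$ and $x'$ to move $t$ into $\supp(x'-x)$, and once more to move $s$ into $\supp(x-x')$---using $\Q_{\underline k-1}=\varnothing$ to pin down the colors of the exchanged elements and the optimality of $x'\in\mathcal S_{\underline k}$ to show the transition cost does not increase, so that minimality is preserved rather than established from scratch. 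You would need to restructure your argument along those lines.
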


\begin{proof}
  Let $(s,t)$ be a minimum transition with respect to $x$. From the definition of transition, in particular, we have
  \begin{equation}
    \label{eq:transition_B}
    s \in E_1, \quad t \in E_0,  
    \quad \mbox{and} \quad x-\chi_s+\chi_t\in\Q.
  \end{equation}

  \red{We first claim that there exists a minimum transition $(s,v)$ with $v\in \supp(x'-x)$. If $t\in \supp(x'-x)$, we can take $v=t$. Therefore, we may assume $t\notin\supp(x'-x)$, which implies $t \in\supp(x-\chi_s+\chi_t-x')$.}
%
%
%
From the M-convexity of $h$ and the last expression in \eqref{eq:transition_B}, there exists \red{$v \in\supp(x'-x+\chi_s-\chi_t)$} such that $x-\chi_s+\chi_v, \: x'-\chi_v+\chi_t\in\Q$, and
  \begin{equation}
    \label{eq:M-convex-h}
    h(x-\chi_s+\chi_t) + h(x') \geq h(x-\chi_s+\chi_v) + h(x'-\chi_v+\chi_t).      
  \end{equation}
  From~\eqref{eq:transition_B}, we obtain $b(t) = 0$. If $b(v)=1$, we get $x'-\chi_v+\chi_t\in \Q_{\underline k-1}$, which contradicts the fact that $\Q_{\underline k -1} = \varnothing$. Thus, we must have $b(v)=0$, which implies $v \in E_0$. On the other hand, since $b(s)=1$, we obtain $v\neq s$, \red{and hence $v\in\supp(x'-x)$.} Since $x-\chi_s+\chi_v\in \Q$, we conclude that $(s,v)$ is a transition with respect to $x$.

  From $x' \in \mathcal S_{\underline k}$ and $x' -\chi_v +\chi_t\in \Q_{\underline k}$ we have $h(x')\leq h(x' -\chi_v+\chi_t)$, which together with~\eqref{eq:M-convex-h} \red{implies}
  \[
    h(x-\chi_s+\chi_t) \geq h(x-\chi_s+\chi_v), 
  \]
  and \red{hence}
  \[
    h(x;s,t) = h(x-\chi_s+\chi_t)-h(x) \geq h(x-\chi_s+\chi_v) - h(x) = h(x;s,v).
  \]
  Since $(s,t)$ is taken as a minimum transition, the above inequality implies that $(s,v)$ is also a minimum transition, \red{which completes the proof of the claim.} 

  \red{We now prove the existence of a minimum transition $(u,v)$ with $u \in \supp(x-x')$ and $v \in\supp(x'-x)$.} 
  If $s\in\supp(x-x')$, then we can take $u = s$, so we may assume that $s\notin\supp(x-x')$. This also implies that $s\in\supp(x'-x+\chi_s-\chi_v)$. Once again from the M-convexity of $h$, there exists an element \red{$u\in\supp(x -\chi_s+\chi_v-x')$} such that $x-\chi_u+\chi_v$, $x'-\chi_s+\chi_u \in \Q$, and 
  \begin{equation}
    \label{eq:eq:M-convex-h2}
    h(x-\chi_s+\chi_v) +h(x') \geq h(x-\chi_u+\chi_v)+h(x'-\chi_s+\chi_u).      
  \end{equation} 
  Recalling~\eqref{eq:transition_B}, if $b(u) = 0$, we have $x'-\chi_s+\chi_u\in \Q_{\underline k-1}$, which contradicts the fact that $\Q_{\underline k -1} = \varnothing$. Thus, $b(u)=1$ holds and consequently $u \in E_1$, \red{which implies $u\neq v$, and hence $u\in\supp(x-x')$.} From the previous discussion, \red{we have already seen} $v \in E_0$. \red{Since} $x-\chi_u+\chi_v \in \Q$ also holds, we \red{may assert} that $(u,v)$ is a transition with respect to $x$. 
  
  \red{Since} $x' \in \mathcal S_{\underline k}$ and $x' -\chi_s +\chi_u \in \Q_{\underline k}$, we \red{have} $h(x' -\chi_s +\chi_u) \geq h(x')$. This, together with~\eqref{eq:eq:M-convex-h2}, \red{implies}
  \[
    h(x-\chi_s+\chi_v) \geq h(x-\chi_u+\chi_v)
  \]
  and \red{hence} 
  \[
    h(x;s,v) = h(x-\chi_s+\chi_v)-h(x) \geq h(x-\chi_u+\chi_v) - h(x) = h(x;u,v).
  \]
  Since $(s,v)$ is a minimum transition, we conclude from the above result that $(u,v)$ is also a minimum transition with respect to $x$.
%
\end{proof}

The above result shows how we can solve \eqref{prob:MBB}.  
The procedure is given in Algorithm~\ref{alg:MBB}. We first compute the initial $x_k$ and the target $x'$ by solving two minimization problems with \mbox{M-convex} functions. The main difference from Algorithm~\ref{alg:MnatBB} lies in the selection of the minimum transitions $(u_k,v_k)$ that are always chosen from the restricted set $\Delta_1 \times \Delta_0$ with $\Delta_1:=E_1\cap\supp(x_k-x')$ and $\Delta_0:=E_0\cap\supp(x'-x_k)$. Once a minimum transition is found, the algorithm updates~$x$ to~$x-\chi_u+\chi_v$, and the algorithm repeats until the iteration $\underline{k}-1$. Once again, the number of iterations is bounded by $K:=\overline k -\underline k$. In each iteration, the transition cost is computed for every pair of elements taken from the sets $\Delta_1$ and $\Delta_0$. The iterations take $\mathcal{O}(|E|^2KT_{\mathrm{M}})$ time, which \red{dominates} the executions of M-convex function minimization at the beginning. Thus, the asymptotic running time bound of Algorithm~\ref{alg:MBB} for \eqref{prob:MBB} is the same as that of Algorithm~\ref{alg:MnatBB}.  

We now consider a special case where the effective domain of $h$ is restricted to $0$-$1$ vectors. In this case, all feasible points correspond to characteristic vectors of bases of a matroid~$\M$. Let $r$ denote the rank of $\M$. The minimization of an M-convex function on a matroid can be performed by the greedy algorithm in $\mathcal O(r|E|T_\mathrm{M})$ time. Minimizing the linear function $\langle b,\cdot\rangle$ over the bases of $\M$ can also be done by the greedy algorithm in the same running time bound. Note that the set $\Q'$ of minimizers forms a base family of a matroid. Let $h'$ denote the function defined by
$h'(x)=h(x)$ for $x\in\Q'$ and $h'(x)=+\infty$ for $x\notin \Q'$. Then 
$h'$ is again an M-convex function, and hence one can minimize $h'$ in $\mathcal O(r|E|T_\mathrm{M})$ time. We also have $K\leq r$, $|\Delta_0|\leq r$, and $|\Delta_1|\leq r$, which imply that the total computational cost of the iterations is $\mathcal O(r^3 T_\mathrm{M})$. Thus, the entire running time of Algorithm~\ref{alg:MBB} is $\mathcal O(r(r^2+|E|)T_\mathrm{M})$.
This improves over the \red{$\mathcal{O}(r|E|^2T_\mathrm{M})$} running time bound of Algorithm~\ref{alg:MnatBB} directly applied to the problems with $0$-$1$ effective domains.

\X{
Let us now analyze the complexity of Algorithm~\ref{alg:MBB}. 
Define $r = \mathrm{rank}(\mathcal M)$. When decreasing the binary value $k$ from $j$ to $\underline{k}$, we have 
$\{\underline k,\ldots,j\} \subseteq \{ 0,\ldots,r\}$. Thus, the size of the while-loop with respect to $k$ is $\mathcal{O}(r)$.
Also, in each iteration, the transition cost is computed for every pair of elements taken from the sets $\Delta_1$ and $\Delta_0$. Since the minimum transition used in each step is removed from consideration, the number of iterations of the for-loop is bounded above by $(r - (j - k))^2$. If $T_{\mathrm{M}}$ is the time required to evaluate the M-convex function, then the total computational time over the entire while-loop is bounded above by $\sum_{k= \underline k}^j (r-(j-k))^2\, T_{\mathrm{M}}$, which can be evaluated as follows:
\begin{align*}
  \sum_{k= \underline k}^j (r-(j-k))^2\, T_{\mathrm{M}} 
  & \leq \sum_{k=0}^r (r-(r-k))^2 \,T_M \\
  & =\frac16 r(r+1)(2r+1) T_M\\
  & = \mathcal O(r^3 T_M).
\end{align*}    
Therefore, the total computational time of Algorithm~\ref{alg:MBB} in this case is given by
\[
  2T_{\mathrm{min\,M}} + \mathcal O(r^3T_{\mathrm{M}} ), 
\]
where $T_{\mathrm{min\,M}}$ is the time required to minimize an M-convex function on a matroid. 

Note that if only Lemma~\ref{lem:MBB} is considered, it will be necessary to compute the set difference between the bases at each step and to compare all the values from scratch. In contrast, Algorithm~\ref{alg:MBB} uses the result of Theorem~\ref{theo:MBB}, which reduces the search space linearly, consequently making the method more efficient.
}

\begin{algorithm}[H]
	\caption{MBB algorithm}
	\label{alg:MBB}
	\begin{algorithmic}[1] 
	\REQUIRE An M-convex function~$h$, and a binary linear function $\inner{b}{\cdot}$.
	\ENSURE Complete set of Pareto optimal values $\mathcal X_{\mathrm{cP}}$
    \STATE Find an $x\in\dom h$ that minimizes $h(x)$. 
    \STATE $k\gets\langle b,x \rangle$, $\mathcal X_{\mathrm{cP}} \gets \varnothing$
    \STATE Find an $x'\in\Z^E$ that minimizes $h(x')$ 
           in $\Arg\min_{y} \{\langle b,y\rangle\mid y\in\dom h\}$.
	\STATE $\underline k\gets\langle b,x'\rangle$ 
	\WHILE{$k>\underline{k}$}
        \STATE $\Delta_1 \leftarrow E_1\cap \supp(x-x'),\ \Delta_0 \leftarrow E_0\cap \supp(x'-x)$
        \STATE Find a transition $(u,v) \in \Delta_1 \times \Delta_0$ that minimizes $h(x;u,v)$. 
        \IF{$h(x;u,v) > 0$}
            \STATE $\mathcal X_{\mathrm{cP}} \leftarrow 
                    \mathcal X_{\mathrm{cP}} \cup \{(h(x),k)\}$
        \ENDIF
        \STATE $x \leftarrow x -\chi_u+\chi_v$
        \STATE $k \leftarrow k-1$
	\ENDWHILE
    \RETURN $\mathcal X_{\mathrm{cP}}$
	\end{algorithmic}
\end{algorithm}


\subsection*{Application to bipartite matching with edge and vertex costs}

We now provide an example of biobjective optimization with M-convex and binary linear functions.

Let $H=(S,T;A)$ be a bipartite graph with disjoint vertex sets $S,T$ and edge set $A$, where each edge in $A$ is between $S$ and $T$. An edge subset $M\subseteq A$ is called a matching in $H$ if no two distinct edges in $M$ share an end-vertex. In other words, a matching $M$ is a subset of $A$ such that $|\partial M|=2|M|$, where $\partial M$ designates the set of end-vertices of the edges in $M$.

{A matching $M$ is called $T$-perfect if it satisfies $\partial M\supseteq T$. Note that a $T$-perfect matching represents an assignment, i.e., one-to-one mapping from $T$ to $S$. Suppose that a bipartite graph $H$ has a $T$-perfect matching. Let $\B$ denote the family of subsets $R\subseteq S$ that admit a matching $M$ with $\partial M=R\cup T$. 
Then $\B$ is known to satisfy the simultaneous exchange property~\textbf{(B)}, and $(S,\B)$ is called a transversal matroid.

Consider a cost function $c \colon A\to\Re$ and a binary function $b \colon S\to\{0,1\}$.
Our aim is to find a pair of a vertex subset $R\subseteq S$ and a $T$-perfect matching $M$ with $\partial M=R\cup T$ so that $b(R):=\sum_{s\in R}b(s)$ and $c(M):=\sum_{e\in M}c(e)$ are minimized simultaneously. See Fig.~\ref{fig:example_M} for a simple example. For a base $R\in\B$ of the transversal matroid defined by $H$, let $\gamma(R)$ denote the minimum cost $c(M)$ among all the $T$-perfect matchings $M$ with $\partial M=R\cup T$. A function $h\colon\Z^S\to\overline{\Re}$ is now defined by $h(x):=\gamma(R)$ if $x=\chi_R$ for some $R\in\B$ and $h(x):=\infty$ otherwise. The function $h$ thus defined is known to be an M-convex function.

Given a subset $R\subseteq S$ with $|R|=|T|$ one can compute the function value $h(\chi_R)$ in $\mathcal{O}(|T|(|A|+|S|\log|S|))$ time. Therefore, a direct application of Algorithm \ref{alg:MBB} enumerates all the Pareto optimal values in $\mathcal{O}(|T|^2(|T|^2+|S|)(|A|+|S|\log|S|))$ time. Reducing the computation of minimum transition to a shortest path problem, one can improve this algorithm to run in $\mathcal{O}(|T|(|A|+|S|\log|S|))$ time as follows.

\begin{figure}[t!]
  \centering
  \begin{tabular}{@{}c@{\hspace{5pt}}c@{\hspace{5pt}}c@{\hspace{5pt}}c@{\hspace{5pt}}c@{}}
  \begin{tikzpicture}[
    node/.style={circle,draw,minimum size=12pt,inner sep=0pt,font=\small},
    edge/.style={thick},
    cost/.style={font=\small}
    ]
    \node[node] (1) at (0,1) {1};
    \node[node] (2) at (0,0) {2};
    \node[node] (3) at (0,-1) {3};
    \node[node] (4) at (1.75,0.5) {4};
    \node[node] (5) at (1.75,-0.5) {5};
    \draw[edge] (1) -- node[cost, pos=0.4, above] {3} (4);
    \draw[edge] (2) -- node[cost, pos=0.4, above] {1} (4);
    \draw[edge] (2) -- node[cost, pos=0.7, yshift=4pt, right] {2} (5);
    \draw[edge] (3) -- node[cost, pos=0.25, yshift=2pt, left] {2} (4);
    \draw[edge] (3) -- node[cost, pos=0.4, below] {1} (5);
  \end{tikzpicture}
  &
  \begin{tikzpicture}[
    node/.style={circle,draw,minimum size=12pt,inner sep=0pt,font=\small},
    edge/.style={thick},
    cost/.style={font=\small}
    ]
    \node[node] (1) at (0,1) {1};
    \node[node] (2) at (0,0) {2};
    \node[node] (3) at (0,-1) {3};
    \node[node] (4) at (1.75,0.5) {4};
    \node[node] (5) at (1.75,-0.5) {5};
    \draw[edge] (1) -- node[cost, pos=0.4, above] {3} (4);
    \draw[edge, red, very thick] (2) -- node[cost, pos=0.4, above] {1} (4);
    \draw[edge] (2) -- node[cost, pos=0.7, yshift=4pt, right] {2} (5);
    \draw[edge] (3) -- node[cost, pos=0.25, yshift=2pt, left] {2} (4);
    \draw[edge, red, very thick] (3) -- node[cost, pos=0.4, below] {1} (5);
  \end{tikzpicture}
  &
  \begin{tikzpicture}[
    node/.style={circle,draw,minimum size=12pt,inner sep=0pt,font=\small},
    edge/.style={thick},
    cost/.style={font=\small}
    ]
    \node[node] (1) at (0,1) {1};
    \node[node] (2) at (0,0) {2};
    \node[node] (3) at (0,-1) {3};
    \node[node] (4) at (1.75,0.5) {4};
    \node[node] (5) at (1.75,-0.5) {5};
    \draw[edge, red, very thick] (1) -- node[cost, pos=0.4, above] {3} (4);
    \draw[edge] (2) -- node[cost, pos=0.4, above] {1} (4);
    \draw[edge, red, very thick] (2) -- node[cost, pos=0.7, yshift=4pt, right] {2} (5);
    \draw[edge] (3) -- node[cost, pos=0.25, yshift=2pt, left] {2} (4);
    \draw[edge] (3) -- node[cost, pos=0.4, below] {1} (5);
  \end{tikzpicture}
  &
  \begin{tikzpicture}[
    node/.style={circle,draw,minimum size=12pt,inner sep=0pt,font=\small},
    edge/.style={thick},
    cost/.style={font=\small}
    ]
    \node[node] (1) at (0,1) {1};
    \node[node] (2) at (0,0) {2};
    \node[node] (3) at (0,-1) {3};
    \node[node] (4) at (1.75,0.5) {4};
    \node[node] (5) at (1.75,-0.5) {5};
    \draw[edge] (1) -- node[cost, pos=0.4, above] {3} (4);
    \draw[edge] (2) -- node[cost, pos=0.4, above] {1} (4);
    \draw[edge, red, very thick] (2) -- node[cost, pos=0.7, yshift=4pt, right] {2} (5);
    \draw[edge, red, very thick] (3) -- node[cost, pos=0.25, yshift=2pt, left] {2} (4);
    \draw[edge] (3) -- node[cost, pos=0.4, below] {1} (5);
  \end{tikzpicture}
  &
  \begin{tikzpicture}[
    node/.style={circle,draw,minimum size=12pt,inner sep=0pt,font=\small},
    edge/.style={thick},
    cost/.style={font=\small}
    ]
    \node[node] (1) at (0,1) {1};
    \node[node] (2) at (0,0) {2};
    \node[node] (3) at (0,-1) {3};
    \node[node] (4) at (1.75,0.5) {4};
    \node[node] (5) at (1.75,-0.5) {5};
    \draw[edge, red, very thick] (1) -- node[cost, pos=0.4, above] {3} (4);
    \draw[edge] (2) -- node[cost, pos=0.4, above] {1} (4);
    \draw[edge] (2) -- node[cost, pos=0.7, yshift=4pt, right] {2} (5);
    \draw[edge] (3) -- node[cost, pos=0.25, yshift=2pt, left] {2} (4);
    \draw[edge, red, very thick] (3) -- node[cost, pos=0.4, below] {1} (5);
  \end{tikzpicture}
  \\
  \begin{tabular}{@{}c@{}} $b(1)=0$ \\ $b(2)=1$ \\ $b(3)=1$ \end{tabular} &
  \begin{tabular}{c} 
    \begin{tabular}{r} $c(M)=2$ \\ $b(R)=2$ \end{tabular} \\ 
    Pareto opt. \end{tabular} &
  \begin{tabular}{@{}c@{}} 
    \begin{tabular}{r} $c(M)=5$ \\ $b(R)=1$ \end{tabular} \\ 
    Not Pareto opt. \end{tabular} &
  \begin{tabular}{@{}c@{}} 
    \begin{tabular}{r} $c(M)=4$ \\ $b(R)=2$ \end{tabular} \\ 
    Not Pareto opt. \end{tabular} &
  \begin{tabular}{@{}c@{}} 
    \begin{tabular}{r} $c(M)=4$ \\ $b(R)=1$ \end{tabular} \\ 
    Pareto opt. \end{tabular} \\
  \end{tabular}
  \caption{\blue{Example of a bipartite matching problem with edge and vertex costs. We consider $S = \{1,2,3\}$, $T=\{4,5\}$, and the edge costs are indicated next to the corresponding edges. In this case, we have four possible $T$-perfect matchings, shown as thick red lines, and two of them are Pareto optimal. Considering the improved algorithm, note that given a perfect matching that minimizes $c(M)$ -- in this case, the leftmost matching above -- the shortest path (visiting vertices 1, 4, and 2 in this order) and the corresponding update of the matching yield the rightmost matching, which indeed has $b(R)$ decreased by one.}}
  \label{fig:example_M}
\end{figure}

When finding a $T$-perfect matching $M$ that minimizes $c(M)$, one also obtains an optimal potential $p \colon S\cup T\to\Re$, which satisfies (\rnum{1}) $p(s)+p(t)\leq c(a)$ for any edge $a=(s,t)\in A$ with $s\in S$ and $t\in T$, (\rnum{2}) $p(s)+p(t)=c(a)$ for any edge $a=(s,t)\in M$ with $s\in S$ and $t\in T$, (\rnum{3}) $p(s)\geq 0$ for any $s\in S$, and (\rnum{4}) $p(s)=0$ for any $s\in S\setminus \partial M$. Construct an auxiliary directed graph $D_M=(S\cup T,A)$ in which the arcs in $A\setminus M$ are directed from $S$ to $T$ and those in $M$ are directed from $T$ to $S$. The reduced cost function $c_p \colon A\to\Re$ is defined by $c_p(a):=c(a)-p(s)-p(t)$ for $a=(s,t)\in A\setminus M$ and $c_p(a):=p(s)+p(t)-c(a)$ for $a=(s,t)\in M$, where $s\in S$ and $t\in T$. Note that $c_p(a)\geq 0$ for any $a\in A$, and in particular, $c_p(a)=0$ for $a\in M$. Consider $S_0:=\{s\mid s\in S,\, b(s)=0\}$ and $S_1:=\{s\mid s\in S,\, b(s) =1\}$. One can find a shortest path $P$ in $D_M$ from $S_0\setminus \partial M$ to $S_1\cap \partial M$ with respect to the reduced cost~$c_p$ by Dijkstra's algorithm in $\mathcal{O}(|A|+|S|\log |S|)$ time. Then replace the matching $M$ by the symmetric difference of $M$ and the edge set of $P$. Update the potential $p$ to $p(s):=p(s)-\min\{d(s),\ell(P)\}$ for $s\in S$ and $p(t):=p(t)+\min\{d(t),\ell(P)\}$ for $t\in T$, where $d(v)$ is the shortest path distance from $S_0\setminus \partial M$ to $v\in S\cup T$ and $\ell(P)$ is the length of the shortest path $P$. Note that these updates keep (\rnum{1})--(\rnum{4}) and decrease $b(R)$ by one, where $R=S\cap\partial M$. Consequently, $c(M)$ attains the minimum cost among all the $T$-perfect matchings with the same value of $b(R)$. If there is no directed path from $S_0\setminus\partial M$ to $S_1\cap \partial M$, then there is no $T$-perfect matching $M$ with a smaller value of $b(R)$. Repeating this iteration $O(|T|)$ times, one can obtain all the Pareto optimal values in $\mathcal{O}(|T|(|A|+|S|\log|S|))$ time.


\section{\texorpdfstring{Biobjective optimization with M$^\natural$-convex and lexicographic functions}{Biobjective optimization with M-natural-convex and lexicographic functions}}
\label{sec:MLB}

In this section, instead of optimization over integer lattice points, let us restrict ourselves to g-matroids. We first introduce a preference order over the ground set $E$ of a g-matroid $\M=(E,\mathcal{P})$. Assume that $E$ is partitioned into $m$ categories $E_1,\ldots, E_m$, i.e., $E_i \cap E_j = \varnothing$ for all $i \ne j$ and $E = \bigcup_{k=1}^{m} E_k$. Let us assume the preference order $E_1 \prec E_2 \prec \cdots \prec E_m$, where $E_i \prec E_j$ indicates that elements of $E_j$ are preferred over elements of $E_i$. For a subset $X\subseteq E$, we define a nonnegative vector $\eta(X) \in \Z^m$ by $\eta(X)_i = |E_i \cap X|$, which counts the number of elements from each category contained in $X$. Here, we refer to $\eta$ as a \emph{lexicographic function}, but in some literature, the associated vector is called \emph{ordinal cost}~\cite{KSS23}. 

Before defining the problem, we first recall that the lexicographic cone in $\Re^m$ is given~by
\[
  K_{\mathrm{lex}}^m = \{ 0 \} \cup 
  \{ z \in \Re^m \mid \mbox{there exists } k \mbox{ s.t. } z_k > 0 \mbox{ and }
  z_i = 0 \mbox{ for all } i < k \}.
\]
Clearly, $K_{\mathrm{lex}}^m$ is convex, pointed but neither closed nor open (see Fig.~\ref{fig:lex_cone} for the case $m=2$). The binary relation induced by~$K_{\mathrm{lex}}^m$, called lexicographic order, is actually a total order~\cite{Luc89}. 
We then use the following notation: $x \le_{\mathrm{lex}} y$ if and only if $y-x \in {K^m_\mathrm{lex}}$. We also write $x\lneq_\mathrm{lex}y$ if $x\le_{\mathrm{lex}} y$ and $x\neq y$.  

\begin{figure}[ht!]
  \centering
  \begin{tabular}{c@{}c@{}c@{}c@{}c@{}c@{}c}
      \begin{tabular}{c}
      \begin{tikzpicture}
        \filldraw (0,0) circle (2pt);             
        \draw[dashed, thick] (0,0) -- (1,0);      
        \draw[dashed, thick] (-0.5,0) -- (0,0);   
        \draw[-{Latex}, thick] (0.9,0) -- (1,0);  
        \draw[dashed, thick] (0,-1) -- (0,0);     
        \draw[dashed, thick] (0,0) -- (0,1);      
        \draw[-{Latex}, thick] (0,0.9) -- (0,1);  
        \node at (1.3,0) {$z_1$};
        \node at (-0.3,0.8) {$z_2$};
      \end{tikzpicture}
    \end{tabular}
    &
    \begin{tabular}{c}
      $\bigcup$
    \end{tabular}
    &
    \begin{tabular}{c}
      \begin{tikzpicture}
        \fill[gray!30, pattern color=gray] (0,0) rectangle (1,1);  
        \fill[gray!30, pattern color=gray] (0,0) rectangle (1,-1); 
        \draw[-{Latex}, thick] (0,0) -- (1,0);    
        \draw[dashed, thick] (-0.5,0) -- (0,0);   
        \draw[dashed, thick] (0,-1) -- (0,0);     
        \draw[dashed, thick] (0,0) -- (0,1);      
        \draw[-{Latex}, thick] (0,1);             
        \node at (1.3,0) {$z_1$};
        \node at (-0.3,0.8) {$z_2$};
      \end{tikzpicture}
    \end{tabular}
    &
    \begin{tabular}{c}
      $\bigcup$
    \end{tabular}
    &
    \begin{tabular}{c}
      \begin{tikzpicture}
        \draw[dashed, thick] (0,0) -- (1,0);      
        \draw[dashed, thick] (-0.5,0) -- (0,0);   
        \draw[-{Latex}, thick] (0.9,0) -- (1,0);  
        \draw[dashed, thick] (0,-1) -- (0,0);     
        \draw[-{Latex}, thick] (0,0) -- (0,1);    
        \draw[fill=white] (0,0) circle (2pt);     
        \node at (1.3,0) {$z_1$};
        \node at (-0.3,0.8) {$z_2$};
      \end{tikzpicture}
    \end{tabular}
    &
    \begin{tabular}{c}
      $=$
    \end{tabular}
    &
    \begin{tabular}{c}
      \begin{tikzpicture}
        \fill[gray!30, pattern color=gray] (0,0) rectangle (1,1);  
        \fill[gray!30, pattern color=gray] (0,0) rectangle (1,-1); 
        \filldraw (0,0) circle (2pt);             
        \draw[-{Latex}, thick] (0,0) -- (1,0);    
        \draw[dashed, thick] (-0.5,0) -- (0,0);   
        \draw[dashed, thick] (0,-1) -- (0,0);     
        \draw[-{Latex}, thick] (0,0) -- (0,1);    
        \node at (1.3,0) {$z_1$};
        \node at (-0.3,0.8) {$z_2$};
      \end{tikzpicture}
    \end{tabular}
  \end{tabular}
  \caption{Lexicographic cone in $\Re^2$}
  \label{fig:lex_cone}
\end{figure}

Based on this, we say that $X$ \emph{lexicographically dominates} $Y$ with respect to $\eta$, when $\eta(X) \lneq_{\mathrm{lex}} \eta(Y)$, i.e., if there exists $k\in \{1,\ldots,m\}$ such that
\[
  \eta(X)_k < \eta(Y)_k \quad \mbox{and} \quad
  \eta(X)_i = \eta(Y)_i \mbox{ for all } i\in \{1,\ldots,k-1\}.
\]
In other words, $X$ is preferred to $Y$ if it contains fewer elements from a lower-priority category. When~$k=1$, we consider that the condition on~$i$ is automatically satisfied. We also say that $X^\ast$ \emph{lexicographically minimizes} $\eta$ among a set $\mathcal{X}$ when $X^\ast$ lexicographically dominates all the points in $\mathcal{X}$ with respect to $\eta$.

We now consider the following biobjective optimization problem:
\begin{align}
  \min_{\Re_+,K^m_\mathrm{lex}} \quad & (g(X),\,\eta(X)) \label{prob:MLB} \tag{MLB} \\
  \mbox{s.t.}\quad & X \in \mathcal P, \nonumber
\end{align}
where $g \colon \mathcal P \to \Re$ is an M$^\natural$-convex function defined in an M$^\natural$-convex family $\mathcal{P}$, and $\eta \colon \mathcal P \to \Z^m$ is a lexicographic function. This minimization is interpreted with respect to different partial orders, i.e., a point $X^* \in \mathcal P$ is Pareto optimal if there is no $X \in \mathcal P$ such that $g(X) \leq g(X^*)$, $\eta(X) \leq_{\mathrm{lex}} \eta(X^*)$, and $(g(X),\eta(X))\neq (g(X^*),\eta(X^*))$. 

Note that this problem generalizes those considered in the previous two sections (except that here the variables are restricted to g-matroids), as well as the one discussed by Klamroth et al.~\cite{KSS23}, where~$g$ is linear with nonnegative integer coefficients. In~\cite{KSS23}, the authors considered an $\varepsilon$-constraint-type method, minimizing $g(X)$ under the constraint $\eta(X) = \varepsilon$, which can further be formulated as a matroid intersection problem. The Pareto optimal value set can then be obtained via this approach by filtering out the dominated points.

Here, we also consider the $\varepsilon$-constraint method, but we keep the ordering of the lexicographic cone. More specifically, we take a nonnegative $\mu \in \Z^m$ and consider the following subproblem:
\begin{align}
  \min \quad & g(X) \label{prob:MLBu} \tag{MLB$_\mu$} \\
  \mbox{s.t.} \quad & X \in \mathcal P, \nonumber\\
  &\eta(X) \lneq_{\mathrm{lex}} \mu. \nonumber
\end{align}
The result below shows that solving the problem~\eqref{prob:MLBu} gives Pareto optimal solutions for the original problem~\eqref{prob:MLB}, without requiring an additional filtering step. 

\X{
\begin{theorem}
\label{th:LexWeakPareto}
  For all nonnegative $\mu \in \Z^m$, the optimal solution of~\eqref{prob:MLBu} is weakly Pareto optimal for~\eqref{prob:MLB}. 
\end{theorem}
\begin{proof}
  Let $X^\ast$ be an optimal solution of~\eqref{prob:MLBu}, and assume that it is not weakly Pareto optimal of~\eqref{prob:MLB}. Then, there exists $X \in \mathcal P$ such that
  \begin{align}
    \label{eq:weakly-Pareto_spec}
    g(X) < g(X^\ast) \quad \mbox{and} \quad \eta(X) \leq_{\mathrm{lex}} \eta(X^\ast).
  \end{align}
  The feasibility of $X^\ast$ for~\eqref{prob:MLBu} yields $\mu - \eta(X^\ast) \in K^m_\mathrm{lex}$, and \eqref{eq:weakly-Pareto_spec} shows that $\eta(X^\ast) - \eta(X)\in K^m_\mathrm{lex}$. Since $K^m_\mathrm{lex}$ is a convex cone, the sum of its two elements remains in~$K^m_\mathrm{lex}$, and thus
  \[
    (\mu - \eta(X^\ast)) + (\eta(X^\ast) - \eta(X)) = \mu - \eta(X) \in K^m_\mathrm{lex},
  \]
  that is, $X$ is feasible to~\eqref{prob:MLBu}. From~\eqref{eq:weakly-Pareto_spec}, it means $X^\ast$ is not optimal to~\eqref{prob:MLBu}, which is a contradiction.
\end{proof}
}

\begin{theorem}
\label{th:LexPareto}
For a nonnegative $\mu \in \Z^m$, suppose that $X^*\in\mathcal{P}$ lexicographically minimizes~$\eta$ among all the optimal solutions of~\eqref{prob:MLBu}. Then $X^*$ is Pareto optimal for \eqref{prob:MLB}.
\end{theorem}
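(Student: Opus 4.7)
The plan is a direct contradiction argument using the definition of Pareto optimality together with the total-order structure of $\le_{\mathrm{lex}}$. Suppose to the contrary that $X^*$ is not Pareto optimal for \eqref{prob:MLB}. Then there exists $X\in\mathcal{P}$ such that
\[
  g(X)\le g(X^*),\quad \eta(X)\le_{\mathrm{lex}}\eta(X^*),\quad (g(X),\eta(X))\ne(g(X^*),\eta(X^*)).
\]
My first step would be to verify that this $X$ is feasible for the scalarized problem \eqref{prob:MLBu}. Since $X^*$ is feasible, we have $\eta(X^*)\lneq_{\mathrm{lex}}\mu$, and chaining this with $\eta(X)\le_{\mathrm{lex}}\eta(X^*)$ through the transitivity of the lexicographic (total) order yields $\eta(X)\le_{\mathrm{lex}}\mu$. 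To upgrade this to the strict relation $\eta(X)\lneq_{\mathrm{lex}}\mu$, I would argue by contradiction: if $\eta(X)=\mu$, then $\mu=\eta(X)\le_{\mathrm{lex}}\eta(X^*)\le_{\mathrm{lex}}\mu$ would force $\eta(X^*)=\mu$, contradicting $\eta(X^*)\ne\mu$. Hence $X$ is feasible for \eqref{prob:MLBu}.

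With feasibility in hand, I would split into two cases driven by the dominance assumption. If $g(X)<g(X^*)$, then $X$ is feasible for \eqref{prob:MLBu} with strictly smaller objective value than $X^*$, contradicting the optimality of $X^*$ for \eqref{prob:MLBu}. Otherwise $g(X)=g(X^*)$, and the assumption $(g(X),\eta(X))\ne(g(X^*),\eta(X^*))$ forces $\eta(X)\ne\eta(X^*)$, hence $\eta(X)\lneq_{\mathrm{lex}}\eta(X^*)$. In this subcase $X$ is an optimal solution of \eqref{prob:MLBu} (same objective value, feasible) whose $\eta$-value strictly lex-dominates $\eta(X^*)$, contradicting the choice of $X^*$ as a lex-minimizer of $\eta$ among all optimal solutions of \eqref{prob:MLBu}.

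Neither step is technically deep; the only delicate point is the upgrade from $\eta(X)\le_{\mathrm{lex}}\mu$ to $\eta(X)\lneq_{\mathrm{lex}}\mu$, which is where the strict constraint $\eta(X^*)\lneq_{\mathrm{lex}}\mu$ in the definition of \eqref{prob:MLBu} is essential. Everything else follows from the convexity/transitivity of the lexicographic cone already recorded before the statement of the theorem, and no use of M$^\natural$-convexity of $g$ is needed — the result is purely order-theoretic.
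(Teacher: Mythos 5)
Your proposal is correct and follows essentially the same route as the paper: assume a dominating $X$ exists, note it is feasible for \eqref{prob:MLBu} via the chain $\eta(X)\le_{\mathrm{lex}}\eta(X^*)\lneq_{\mathrm{lex}}\mu$, rule out $g(X)<g(X^*)$ by the optimality of $X^*$ in \eqref{prob:MLBu}, and rule out the remaining case by the lexicographic minimality of $X^*$ among those optimal solutions. Your explicit verification that $\eta(X)\lneq_{\mathrm{lex}}\mu$ (rather than merely $\le_{\mathrm{lex}}$) is a detail the paper leaves implicit, and is a worthwhile addition.
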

\begin{proof} 
Suppose to the contrary that $X^*$ is not Pareto optimal to \eqref{prob:MLB}. Then there exists $X\in\mathcal{P}$ such that either $g(X)<g(X^*)$ and $\eta(X)\leq_\mathrm{lex}\eta(X^*)$ or $g(X)=g(X^*)$ and $\eta(X)\lneq_\mathrm{lex}\eta(X^*)$ hold. 
Since $X^*$ is an optimal solution of \eqref{prob:MLBu}, there is no $X\in\mathcal{P}$ such that $g(X)<g(X^*)$ and $\eta(X)\leq_\mathrm{lex}\eta(X^*)\lneq_\mathrm{lex}\mu$. Therefore, $X\in\mathcal{P}$ must satisfy $g(X)=g(X^*)$ and $\eta(X)\lneq_\mathrm{lex}\eta(X^*)$, which contradicts the choice of $X^*$. 
\end{proof}

Theorem~\ref{th:LexPareto} suggests the following algorithm for enumerating the Pareto optimal value set of \eqref{prob:MLB}. 
The algorithm starts with finding $X\in\mathcal{P}$ that minimizes $g(X)$ by the greedy algorithm. The set of all minimizers of $g$ forms a g-matroid. Among those minimizers, the algorithm selects one that lexicographically minimizes $\eta$. 
Recall that any $Y\in\mathcal{P}$ with $\eta(X)\lneq_{\mathrm{lex}}\eta(Y)$ is dominated by $X$. 
Thus, $X$ is a Pareto optimal solution, and there is no Pareto optimal solution $Y$ with $\eta(X)\lneq_{\mathrm{lex}}\eta(Y)$.  

Given the current Pareto optimal solution $X$, the algorithm sets $\mu:=\eta(X)$ and solves \eqref{prob:MLBu} to find 
an optimal solution $X^*$ that lexicographically minimizes $\eta$. Theorem~\ref{th:LexPareto} guarantees that $X^*$ is another Pareto optimal solution. In addition, any $Y\in\mathcal{P}$ with $\eta(X^*)\lneq_\mathrm{lex}\eta(Y)\lneq_\mathrm{lex}\eta(X)$ is dominated by $X^*$. The algorithm enumerates all the Pareto optimal values by repeating this process while \eqref{prob:MLBu} is feasible. 

We now discuss how to solve \eqref{prob:MLBu}.
As in Fig.~\ref{fig:lex_cone}, the family $\L$ of subsets $X\subseteq E$ satisfying $\eta(X) \le_{\mathrm{lex}} \mu$ can be written as a disjoint union of the following sets:
$$\L=\bigcup_{k=0}^m \L_k,$$
where 
$$\L_0:=\{X\mid \eta(X)_i=\mu_i \mbox{ for all } i=1,\ldots,m\}$$
and 
$$\L_k:=\{X\mid \eta(X)_i=\mu_i \mbox{ for all } i=1,\ldots,k-1, \mbox{ and } \eta(X)_k < \mu_k \}$$
for $k=1,\ldots,m$.
Also, for any $\lambda, \xi \in \Z^m$ with $0 \le \lambda \le \xi$, we denote 
\[
  \mathcal{I}(\lambda,\xi) := \{ X \subseteq E \mid \lambda_i \le \eta(X)_i \le \xi_i  \mbox{ for all }i=1,\ldots,m\}.
\]
Note that $\mathbf{L}=(E, \mathcal{I}(\lambda,\xi))$ forms a g-matroid.
For each $k=1,\ldots,m$, the family $\L_k$ can be rewritten as $\L_k=\mathcal{I}(\lambda,\xi)$, where~$\lambda$ and~$\xi$ are determined by 
$$
  \lambda_i:=\left\{
    \begin{array}{lc} 
      \mu_i & (i<k), \\ 
      0 & (i\geq k),
    \end{array}
  \right. \quad 
  \xi_i:=\left\{
    \begin{array}{lc} 
      \mu_i & (i<k), \\ 
      \mu_i-1 & (i=k), \\ 
      |E| & (i>k).
    \end{array}
  \right. 
$$
Similarly, we have $\L_0=\mathcal{I}(\mu,\mu)$. 


In order to find an optimal solution to \eqref{prob:MLBu}, 
we solve the optimization subproblem in the form of 
\begin{align}
  \min \quad & g(X) \label{prob:MLBuk} \tag{MLB$_{\mu,k}$}\\
  \mbox{s.t.} \quad & X \in \mathcal{P}\cap\mathcal{L}_k\nonumber
\end{align}
for all $k=1,\ldots,m$, and then select the index $\ell$ that achieves the minimum optimal value. If there is a tie, we adopt the smallest index. Thus we obtain \red{the set $X_\ell$, which} lexicographically minimizes $\eta$ among those optimal solutions $X_k$ of \eqref{prob:MLBuk} for $k=1,\ldots,m$. However, we may still have another solution of (MLB$_{\mu,\ell}$) with lexicographically smaller value of $\eta$ \red{than $X_\ell$}. Therefore, we then identify $X^*$ that minimizes $\eta$ lexicographically among all the optimal solutions of (MLB$_{\mu,\ell}$). This clearly gives an optimal solution of \eqref{prob:MLBu}. In addition, the output $X^*$ lexicographically minimizes~$\eta$ among all the optimal solutions of \eqref{prob:MLBu}.

Obviously, each subproblem~\eqref{prob:MLBuk} can be written as
\begin{align*}
  \min \quad & g(X) \\
  \mbox{s.t.}\quad & X \in \mathcal P \cap \mathcal{I}(\lambda,\xi).
\end{align*}
Defining also the indicator function
\[
  w(X) = \left\{ 
    \begin{array}{ll}
      0,  & \mbox{if } X \in \mathcal{I}(\lambda,\xi),\\
      \infty, & \mbox{otherwise},
    \end{array}
  \right.
\]
we further can write~\eqref{prob:MLBuk} as
\begin{align*}
  \min \quad & g(X)+w(X), \\
  \mbox{s.t.}\quad & X \in \mathcal P. 
\end{align*}
Since $w$ and $g$ are both M$^\natural$-convex, the objective function of the above problem is \mbox{M$^\natural_2$-convex}. The minimization problem of an \mbox{M$^\natural_2$-convex} function is known to be polynomially solvable. Exploiting the g-matroid structure, however, one can reduce this particular problem to the valuated matroid intersection problem introduced by Murota~\cite{Mur96b}. A concrete strongly polynomial algorithm has been presented in \cite{Mur96c}.

The algorithm discussed above for enumerating all the Pareto optimal values of \eqref{prob:MLB} is now described formally in Algorithm~\ref{alg:MLB}. 
The lexicographical minimization of $\eta$ in line~9 can be done as follows. The set of all the optimal solutions of (MLB$_{\mu,\ell}$) forms a g-matroid intersection. We then find a set that minimizes $\eta(\cdot)_{\ell+1}$ by the weighted matroid intersection. The set of all the optimal solutions again forms a g-matroid intersection. We continue this process for $k=\ell+1,\ldots,m$ to obtain the lexicographically optimal solution. The computational cost for this process is dominated by the $\mathcal O(m)$ applications of the valuated matroid intersection. Thus, the total computational time is $\mathcal O(m P T_\mathrm{VMI})$, where $T_{\mathrm{VMI}}$ denotes the time for solving the valuated matroid intersection problem, and $P$ is the number of Pareto optimal values.

Let $r$ denote the rank of the g-matroid $\M$, i.e., the largest cardinality among the feasible sets. Since $\sum_{i=1}^m \mu_i \leq r$ holds for each problem \eqref{prob:MLBu} that appears in the algorithm, the number of possible choices for $\mu$ is given by $\binom{r + m}{m} = \mathcal{O}(r^m)$, which is polynomial in $r$ when $m$ is a fixed constant. However, $P$ can be much smaller than this worst-case bound.  

\begin{algorithm}[H]
	\caption{MLB algorithm}
	\label{alg:MLB}
	\begin{algorithmic}[1] 
	\REQUIRE An M$^\natural$-convex function~$g$, and a lexicographic function~$\eta$
	\ENSURE Complete set $\mathcal X_{\mathrm{cP}}$ of Pareto optimal values 
    \STATE Find $X$ that lexicographically minimizes $\eta$ in
    $\Arg\min_Y\{g(Y)\mid Y\in\mathcal{P}\}$. 
    \STATE $\mathcal{X}_{\mathrm{cP}}\leftarrow \{(g(X),\eta(X)\}$ 
    \STATE $\mu\leftarrow \eta(X)$
    \WHILE{\eqref{prob:MLBuk} is feasible for some $k>0$}
        \FOR{$k=1,\ldots,m$}
            \STATE $\zeta_k \leftarrow \min_{Y} \{ g(Y) \mid Y\in \mathcal P\cap \mathcal{L}_k \}$
        \ENDFOR 
        \STATE Find the smallest index $\ell$ such that $\zeta_\ell=\min\{\zeta_k\mid k=1,\ldots,m\}$.
        \STATE Find $X$ that lexicographically minimizes $\eta$ in $\Arg\min_Y\{g(Y)\mid Y\in\mathcal{P}\cap\mathcal{L}_\ell\}$.
        \STATE $\mathcal X_{\mathrm{cP}} \leftarrow \mathcal X_{\mathrm{cP}}\cup \{(g(X),\eta(X))\}$
        \STATE $\mu\leftarrow\eta(X)$ 
	\ENDWHILE
    \RETURN $\mathcal X_{\mathrm{cP}}$
	\end{algorithmic}
\end{algorithm}


\section{Conclusions}
\label{sec:conclusion}

In this work, we considered three biobjective optimization problems over integer lattice points or g-matroids involving M$^\natural$-convex or M-convex functions, and showed that the entire Pareto optimal value set can be obtained in polynomial time for each of them. For the first two cases, which involve linear functions with binary coefficients, one may apply the restricted swap sequences introduced in~\cite{GT84} to derive more efficient algorithms. However, some of their results rely on the assumption that the cost of a swap is uniform across all bases. Extending these techniques beyond the linear setting, as considered in this paper, remains nontrivial. Another direction for future work is to increase the number of linear functions with binary coefficients, leading to problems with multiple objectives. However, the techniques developed in this study cannot be directly generalized to such cases, and further research is needed to handle such extensions. Finally, we expect that efficient algorithms can be developed for multiobjective optimization involving L-convex or L$^\natural$-convex functions. These functions form the other major pillar of discrete convex analysis, and it is therefore important to develop approaches that do not rely on the weighting method and are capable of recovering the set of Pareto optimal values.


\bibliographystyle{plain} 
\bibliography{journal-titles,references}

\end{document}